\documentclass{amsproc}
\usepackage{amssymb, amsmath, amsthm}
\usepackage[all]{xy}
\usepackage[usenames,dvipsnames]{color}
\usepackage{graphics}

\definecolor{laura}{rgb}{.4, 0, .6}
\definecolor{dorette}{rgb}{0,.7,.7}
\definecolor{Carmen}{rgb}{.0, .2, .9}

\newtheorem{thm}{Theorem}[section]

\newtheorem{prop}[thm]{Proposition}

\theoremstyle{definition}
\newtheorem{dfn}[thm]{Definition}
\newtheorem{defn}[thm]{Definition}
\newtheorem{eg}[thm]{Example}

\newtheorem{rmk}[thm]{Remark}

%  We want to keep our notation consistent, so I'm going to define some commands for us to use here.

\newcommand{\calC}{\mathcal{C}}

\newcommand{\calG}{\mathcal{G}}
\newcommand{\calH}{\mathcal{H}}
\newcommand{\calI}{\mathcal{I}}
\newcommand{\calK}{\mathcal{K}}
\newcommand{\calL}{\mathcal{L}}
\newcommand{\calM}{\mathcal{M}}

\newcommand{\calSI}{\mathcal{SI}}
\newcommand{\calTB}{\mathcal{TB}}
\newcommand{\calT}{\mathcal{T}}

\newcommand{\U}{\tilde{U}}

\newcommand{\GMap}{{\mbox{\rm GMap}}}

\newcommand{\Map}{\mbox{\rm Map}}

\newcommand{\Orbigrpds}{\mbox{\rm Orbigrpds}}

\newcommand{\G}{\mathcal{G}}

\begin{document}
\date{\today}

\title[Orbispaces and their Mapping Spaces via Groupoids]{Orbispaces and their Mapping Spaces via Groupoids:  \\ A Categorical Approach}
\author[V. Coufal, D. Pronk, C. Rovi, L. Scull, C. Thatcher]{Vesta Coufal, Dorette Pronk, Carmen Rovi, Laura Scull, Courtney Thatcher }

\maketitle

\section{Introduction}

Orbifolds were first introduced by  Satake  (who  called  them V-manifolds),
and later studied  by Conway \cite{Con}, Thurston \cite{T} and others. They were developed as a generalization of manifolds,
and the original approach to their study was based on charts and atlases.
The difference with the orbifold context is that we allow certain singularities:   the local neighbourhoods
are homeomorphic to $U = \U/G$ where $G$ is a finite group acting on an open set
$\U \subseteq {\mathbb R}^n$.
An orbifold $M$ can then be defined via an
orbifold atlas, which is a locally compatible family of charts $(\U, G)$ such that the sets
$\U/G$ give a cover of $M$.    The usual notion of equivalence of atlases through
common refinement is used; details can be found in \cite{sa56,sa57}.
Note that the original definition required that all group actions be effective,
but it is often useful to drop this
requirement; we will not  require that $G$ acts effectively on $\U$.

Working with orbifold atlases is cumbersome, particularly when dealing with maps
between orbifolds.  Therefore an alternate way of representing orbifolds using groupoids
has been developed.  It was shown in \cite{MP} that every smooth orbifold can be represented
by a Lie groupoid.  This representation is not unique, but is determined up to Morita equivalence.
This way of representing orbifolds allows for a natural generalization to orbispaces, without a smooth structure,
via  topological groupoids.  It also allows for a groupoid-based definition of orbifold maps
(originally called the `good' or `generalized' orbifold maps) which works well for homotopy theory, as noted in  \cite{Adem}.

In this paper, we give an accessible introduction to the theory of orbispaces via groupoids.   
We define a certain class of topological groupoids, which we call orbigroupoids.
Each orbigroupoid represents an orbispace, but just as with orbifolds and Lie groupoids,
this representation is not unique:  orbispaces are  Morita equivalence classes of orbigroupoids.
The orbigroupoid category can be used as a basis for developing results about orbispaces.
We will discuss the connection between orbigroupoids and orbispaces, focusing particularly on creating mapping objects for orbispaces which themselves have orbispace structure.    Throughout this paper, we illustrate our definitions and results with numerous examples which
we hope will be useful in seeing how the categorical point of view is used to study these spaces.

The maps between orbispaces  can be defined either in terms of a bicategory of fractions, or in terms of Hilsum-Skandalis maps.    This paper takes the bicategory of fractions approach, providing a more   concrete description of the
mapping space construction.   Related work has been done by Chen \cite{Chen}, using a more atlas-based approach to
representing orbispaces, by Haefliger \cite{Hae} for \'etale groupoids and Hilsum-Skandalis maps
 and by Noohi \cite{Noo} for topological stacks.   In this paper, we lay the groundwork for the bicategory approach and illustrate how it can be used to define a mapping orbispace.  Further results about the properties of this mapping orbispace are given in \cite{PS-tocome}.  

 This paper begins with background sections.  Section \ref{S:topgps} gives the definition of orbigroupoids, and illustrates how these represent orbispaces.  Section \ref{S:hom} defines homomorphisms between orbigroupoids, and also defines natural transformations, creating the bicategory of orbigroupoids.  Section \ref{d:orbispaces}  defines the orbispace category in terms of the bicategory of fractions of orbigroupoids,  again giving examples of how the orbispace category is represented in this fashion.

 Section \ref{S:GMap} shows how to use the definitions of Section \ref{S:hom} to create a topological groupoid representing the maps between orbigroupoids.  We work through several non-trivial examples of the resulting mapping space, showing how the orbispace structure appears in this approach.    We finish this paper by showing in Section \ref{inertia} that  we can recover the inertia groupoid   of \cite{kawasaki} as a mapping orbispace from a specific orbispace, a one-point space with isotropy.

\section{Topological Groupoids and Orbigroupoids}\label{S:topgps}

We begin with the standard definition of a topological groupoid:  a groupoid in the category of topological spaces, where we have spaces instead of sets and all maps are continuous.

\begin{defn}
A {\em topological groupoid} $\calG$ consists of a space of objects $\calG_0$ and a space of arrows $\calG_1$.
The category structure is defined by the following continuous maps.
\begin{itemize}
\item  The {\em source map}, $s:  \calG_1 \to \calG_0$, which gives the domain of each arrow.
\item  The {\em target map}, $t:  \calG_1 \to \calG_0$, which gives the codomain of each arrow.
\item  The {\em unit map}, $u:  \calG_0 \to \calG_1$, which gives the identity arrow on an object.
\item  The {\em composition map},  $m:  \calG_1 \times_{\calG_0} \calG_1 \to  \calG_1$ where $m(g_1, g_2) = g_2 \circ g_1 = g_2 g_1$.
The pullback over $\calG_0$  ensures that  $t(g_1) = s(g_2)$, so that we are only composing the arrows that match up at their ends.
\item  The {\em inverse map}, $i:  \calG_1 \to \calG_1$ defined by  $i(g) = g^{-1}$. \end{itemize}
%So we have the following structure maps:    $$\xymatrix{\calG_1\times_{\calG_0}\calG_1\ar[r]^-m& \calG_1\ar@<1ex>[r]^s\ar@<-1ex>[r]_t&\calG_0\ar[l]|u}$$ and $$\xymatrix{\calG_1\ar@<0ex>[r]^i&\calG_1.}$$
These maps need to satisfy the usual category axioms, as well as the expected relationship between an arrow and its inverse.
Specifically, we must have:
\begin{itemize}
\item  (identity)  $m(g,u t(g))=g$ and $m(u  s(g),g)=g$,
\item  (associativity)  $m(g_1,m(g_2,g_3))=m(m(g_1,g_2),g_3)$,
\item  (inverses) $m(g,i(g))=u s(g)$ and  $m(i(g),g)=u t(g)$.
\end{itemize}
\end{defn}
We will often use group notation when discussing arrows in $\calG_1$, writing the identity $u(x)$ as $id_x$, multiplication $m(g_1, g_2) $ as $g_2g_1$ and the inverse map $i(g)$ as $g^{-1}$.

Associated to any topological groupoid we have a topological space defined as a quotient of the object space.  
We think of the arrows in the groupoid as identifications, and form the quotient $\calG_0/\sim$ where $x \sim y$ 
if there is an arrow $g:  x \to y$ in $\calG_1$.    We will denote this quotient space by  $\calG_0/\calG_1$. 
In order to keep our topologies reasonable and the  singularities in this quotient space modeled by quotients 
of finite groups, we need to put some restrictions on our topological groupoids.

\begin{defn}\
An {\em orbigroupoid} is a topological groupoid $\calG$ such that the object space $\calG_0$ and the arrow space $\calG_1$ are compactly generated locally compact, paracompact Hausdorff spaces, and for which the source and target maps  are \'etale (i.e. local homeomorphisms), and  the map  $(s,t)\colon \calG_1\rightarrow \calG_0\times \calG_0$ is proper (i.e. the preimage of a compact set is compact).
\end{defn}

An orbigroupoid defines an orbispace, a topological space with orbifold-type singularities but without the smooth structure.
We think of the orbispace as the quotient space of the objects, but with extra structure at the singularities.  Philosophically, we want to identify points,
but also remember how many times they were identified, and in what way.   
The orbigroupoid allows us to do this.  For any point $x$ in the object space $\calG_0$, 
we define the {\em isotropy group} of $x$  to be $G_x = \{ g \in \calG_1 | s(g) = t(g) = x \}$.  
Because we are working in groupoids, if there is an arrow identifying $x$ to $y$, their isotropy groups will be isomorphic.  
So the isotropy is well-defined on points of the quotient space, and the structure of the singularities can be encoded via isotropy information.

\begin{rmk}\label{D:Etaleproper}
The joint conditions of being \'etale and proper immediately  ensure that the isotropy groups have to be finite: the \'etale condition makes them discrete and the 
proper condition then requires them to be finite.   In fact, these conditions give us even more.  
Let $x\in\calG_0$ be any point in the space of objects.
Since the groupoid is \'etale we can find  open neighbourhoods $U_g$ of 
the elements $g\in G_x$ on which both the source and the target maps restrict to homeomorphisms.
Then since the group $G_x$ is finite,  we can shrink these neighbourhoods so that $s(U_g)=t(U_g)$ for each $g$,  and moreover their images are all the same,
i.e., $s(U_g)=s(U_{g'})$ for all $g,g'\in G_x$.   Call the common image $V_x$.
It is shown in \cite{MP} that properness allows us then to further shrink this neighbourhood $V_x$  (if necessary) so that all arrows in $\calG_1$ which have both
their source and target in $V_x$ are in $\bigcup_{g\in G_x}U_g$, i.e., $(s,t)^{-1}(V_x)=\bigcup_{g\in G_x}U_g$ 
and for each $g\in G_x$, both $s$ and $t$ restrict to a homeomorphism from $U_g$ to $V_x$.  Thus, the quotient space of $V_x$ is a quotient by the group action $G_x$ acting on $V_x$, and we see that with these orbigroupoids, we are indeed modelling spaces which are locally the quotients of finite group actions. 
(The proof of this  appears in
the proof of the implication $4\Rightarrow 1$ of Theorem 4.1 of \cite{MP}.   That paper is about orbifolds rather than orbispaces, and 
the groupoid was required to be effective, but this result does not depend on those conditions.)
\end{rmk}

Note that orbispaces are represented by orbigroupoids, but this representation is not unique.
Orbispaces will be defined as Morita equivalence classes of orbigroupoids.  Before we consider this equivalence,
we give some basic examples of orbigroupoids and their quotient spaces, illustrating how some standard orbispaces are represented.

The first example is a manifold without singularities.

\begin{eg}  Consider the sphere $S^2$.  We will build a topological groupoid $\calG$ representing $S^2$ as follows.
Cover $S^2$ with two open disks, $D_1$ and $D_2$, which intersect each other in an annulus.
Let the object space $\calG_0$ be the disjoint union of the two disks.  The arrow space $\calG_1$ needs to encode
the identifications along the annular overlap.  Hence, we get an annulus of arrows $A_1$ identifying points along the
edge of $D_1$ with their corresponding points along the edge of $D_2$.  So the source map takes $A_1$ to the edge of
$D_1$, and the target map takes $A_1$ to the edge of $D_2$.    Similarly, we have the inverse maps with source and target
reversed forming another annulus $A_2$.   To complete the structure of our topological groupoid, we also include
identity arrows, which sit in two disks homeomorphic to $D_1$ and $D_2$.    Thus we have the orbigroupoid of  Figure \ref{fig:sphere} below.   All the source and target maps are just inclusions, so this is clearly  an \'etale and proper groupoid, hence an  orbigroupoid.   The  quotient space of this groupoid is just our original space $S^2$, with no additional non-trivial isotropy information.

\begin{figure}[h]
\includegraphics{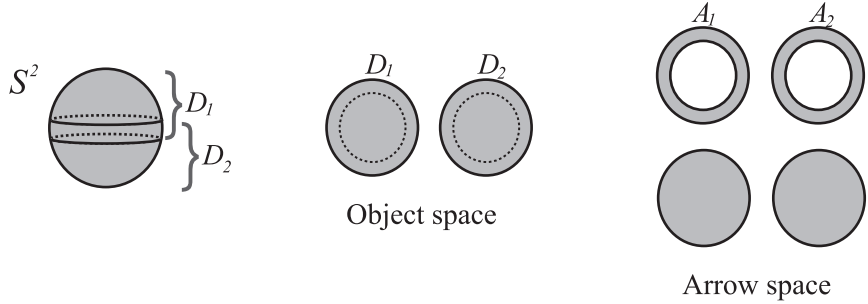}
\caption{2-sphere groupoid}
\label{fig:sphere}
\end{figure}

%\vspace{-2pt}

\end{eg}

This example can easily be generalized to create an orbigroupoid that represents any manifold, which is an example of an orbispace with trivial isotropy.
Next, we look at how to get non-trivial isotropy points via our groupoid representation.
The following example defines a single point with isotropy.

\begin{eg}\label{e:pt}
 Let $G$ be a finite group.  Define the orbigroupoid $*_G$ to have  object space consisting of just one point,
$*$, and arrow space the discrete space $G$.  Composition and inverses are given by the group structure on $G$.  Source and target maps take any arrow $g$ to the point $*$.   Since the object and arrow spaces are finite and discrete, this topological
groupoid is clearly \'etale and proper.    The orbispace associated
to this orbigroupoid is the quotient of the single point $*$ by the group action,  a single point with $G$-isotropy.
\end{eg}

  Note that this is an example of a non-effective orbispace.    We can create a similar space with effective action by a `fat point' construction as follows.

\begin{eg} \label{cone}
Consider the open disk $D^2$ with a $\mathbb{Z}/3$ rotation action that keeps the center point fixed.  
We create a groupoid with the object space equal to the disk itself.  The arrow space encodes the  group action.    
If $\nu$ generates $\mathbb{Z}/3$, then we identify points in an orbit via arrows: $x\to x$, $x\to \nu x$, and 
$x\to \nu^2x$.  Thus the arrow space is three disjoint disks, one for each element of $\mathbb{Z}/3$.  
The source and target maps are defined by the projection and the action respectively: $s(\nu^i,x)=x$ and $t(\nu^i,x)=\nu^ix$, for $i=0,1,2$.
See Figure \ref{fig:cone}.  The inverse arrows are already included,  since $\nu^{-1}=\nu^2$.      
Source and target maps are homeomorphisms from any component, and there are finitely many 
components in the arrow space,  so  this is again \'etale and proper.
The quotient space is topologically a disk with a $\mathbb{Z}/3$-isotropy point at the center.
This orbispace is referred to as an order $3$ cone point in Thurston \cite{T}.  We will denote it  by $\calC_3$.

\begin{figure}[h]
\includegraphics{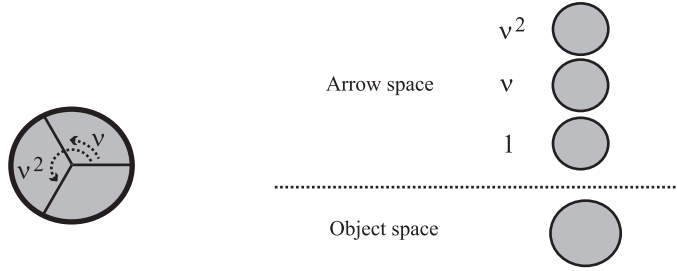}
\caption{Order $3$ cone point groupoid $\calC_3$}
\label{fig:cone}
\end{figure}

\end{eg}

The previous example is a global quotient, defined by the action of a finite group on a space.  Such a global quotient can always be represented by the translation groupoid $G \ltimes X$ where the object space is given by $X$ and the arrow space by $G \times X$, with $s(g,x) = x$ and $t(g, x) = gx$.
The following gives another example of a global quotient orbispace and its translation groupoid.
\begin{eg}\label{e:silvered}
The `silvered interval' is a (closed) interval with $\mathbb{Z}/2$-isotropy at its endpoints.  One way to represent the silvered interval is as the global quotient of a $\mathbb{Z}/2$ action on a circle, with the group acting by reflection and fixing two antipodal points $a$ and $b$.

We describe the translation groupoid $\mathbb{Z}/2 \ltimes S^1$ associated to this action. The object space of $\mathbb{Z}/2 \ltimes S^1$ is $S^1$.
The arrow space is $\mathbb{Z}/2 \times S^1$, the disjoint union of two copies of $S^1$.  The source map is $s(g,x)=x$, and the target map  is $t(g,x)=gx$.  Thus, an arrow of the form $(1 ,x)$ is the identity map $x\to x$, and an arrow of the form $(\tau ,x)$ is a map from $x$ to its reflection $y$.  The inverse of the arrow $(\tau ,x)$ is given by  $(\tau , \tau x)$.   We have non-trivial isotropy maps  $(\tau, a)$ and $(\tau, b)$ creating the isotropy structure on the endpoints of the quotient space.     See Figure ~\ref{fig:si}.
Once again, it is easy to see that this translation groupoid $\mathbb{Z}/2 \ltimes S^1$ is an orbigroupoid
 whose orbispace is the silvered interval.  We will denote this orbigroupoid by $\calSI$.

\begin{figure}[h]
\includegraphics{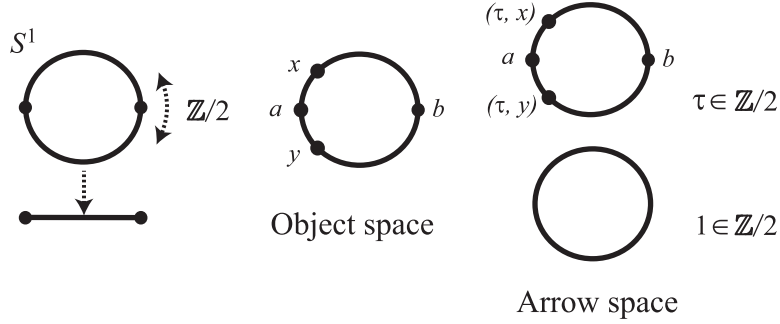}
\caption{Silvered interval}
\label{fig:si}
\end{figure}

 \end{eg}

The next example is not a global quotient.

\begin{eg}\label{teardrop}
The teardrop orbispace is a sphere  $S^2$, with $\mathbb{Z}/3$-isotropy at the north pole.  We represent it with the orbigroupoid $\calT$, created from  an `upper hemisphere'  disk $D_1$ with a $\mathbb{Z}/3$ rotation action (the same as for $\calC_3$ in Example \ref{cone}), and a  `lower hemisphere' disk $D_2$.  The two disks covering the sphere overlap in an annulus around the equator.

The object space $\calT_0$ is the disjoint union of the two upper and lower hemisphere disks, $D_1$ and $D_2$.  The arrow space $\calT_1$ needs to encode both the $\mathbb{Z}/3$ action on the upper hemisphere and the identifications along the annular overlap between the two hemispheres.  As with $\calC_3$, there are three disjoint disks in the arrow space with source and target in $D_1$, corresponding to the identity, $\nu$, and $\nu^2$ actions on points in $D_1$.  There is a single disk corresponding to the identity maps on $D_2$.  For the identification between the disks, the arrow space contains an annulus identifying points in the edge of $D_1$ to their corresponding points in the edge of $D_2$, and a second annulus for the inverse identifications.  See Figure \ref{fig:teardrop}.
These annuli in the arrow space map with the usual inclusion to the upper hemisphere, but via a 3-fold covering to the lower hemisphere (the source map for the first annulus, and the target for the second).  This is not a homeomorphism, but it is a local homeomorphism, giving an orbigroupoid.
\begin{figure}[h]
\includegraphics{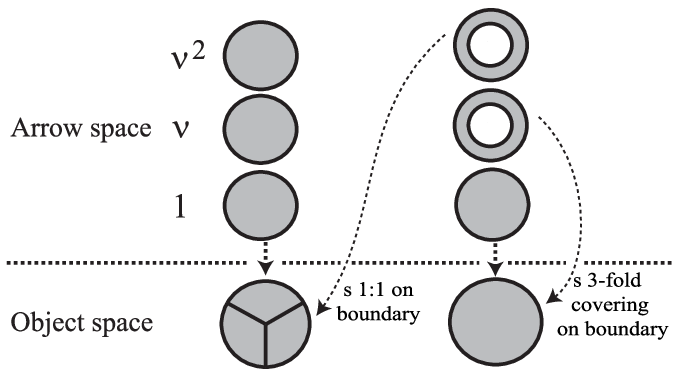}
\caption{Teardrop groupoid $\calT$}
\label{fig:teardrop}
\end{figure}

\end{eg}

The next example gives a somewhat more complicated orbispace.

\begin{eg} \label{billiard}
Let $D$ be an open disk.  The dihedral group $D_3$ (of order 6)  acts on this disk.  We set some notation:  $D_3$ is generated by $\sigma $ and $\rho$, with  $\sigma^2=1$, $\rho^3=1$, $\sigma\rho=\rho^2\sigma$, and $\sigma\rho^2=\rho\sigma$.   Then  $\sigma\in D_3$ acts on the disk by reflection about a chosen line through the center of $D$, and $\rho\in D_3$ acts by counter-clockwise rotation of $D$ by an angle of $2\pi/3$.     
See Figure \ref{fig:cobo}.   The quotient space under this action is  a sector of the disk which we will call a `corner'.  The edges of the sector become `silvered boundaries' with ${\mathbb Z}/2$-isotropy, and the center point becomes a corner point with $D_3$-isotropy.    

\begin{figure}[h]
\includegraphics{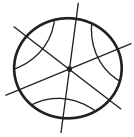}
\caption{Disk with $D_3$ action}
\label{fig:cobo}
\end{figure}

We combine three such corners to create a triangular orbispace where each corner has $D_3$-isotropy, and the edges have $\mathbb{Z}/2$-isotropy; see Figure \ref{fig:three-corners}.   Note that each sector overlaps each of the other sectors, as shown in Figure \ref{fig:three-corners}.   Following naming conventions of Thurston \cite{T}, we will call this the triangular billiard orbispace  $\mathcal{TB}$.

\begin{figure}[h]
\includegraphics{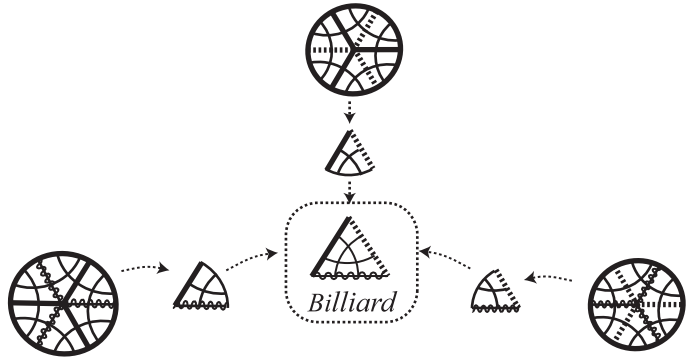}
\caption{Construction of the triangular billiard $\mathcal{TB}$}
\label{fig:three-corners}
\end{figure}

To create a groupoid representing this orbispace, we start with translation groupoids representing each of the three corners as the global quotient $D_3\ltimes D$, and adding in the identifications of the overlaps.   
 So the object space  is the disjoint union of the three disks, one for each sector.  
The arrow space contains six disjoint copies of each of these three disks, corresponding 
to the action of $D_3$ on each disk, with arrows sending $x$ to $gx$ for each $g \in D_3$. 
In addition,  the arrow space contains copies of the overlap shape, representing the glueing 
arrows from one disk to another.    If we consider a space of arrows gluing points in the overlap 
from disk A to those in disk B, we see that there are 6 choices of source embeddings of the overlap in 
disk A (given by three possible images, and a choice of whether to embed with a reflection or not).  There are also 6 possible target embeddings of the overlap in a disk B, again $3$ with reflections and $3$ without.    This gives $36$ possible copies of the overlap, with source in disk A and target in disk B, but  we observe that the map $(i,i)$ (a copy of the overlap with embeddings without reflections)  is the same as  $(\sigma, \sigma)$ (a copy with reflections both into disk A and into disk B), so this reduces the number
to $18$.     See Figure \ref{fig:fish}.
\begin{figure}[h]
\includegraphics{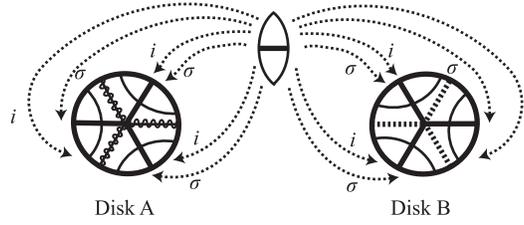}
\caption{Embedding of one of the overlaps}
\label{fig:fish}
\end{figure}
Similarly, there are another $18$ copies of the overlap in the arrow space with source in disk B and 
target in disk A (the inverses of the arrows from disk A to disk B).   In total, we have $36$ overlaps for 
each choice of two of the three disks.  The orbigroupoid $\calTB$ is shown in Figure \ref{fig:TB-groupoid}.
\begin{figure}[h]
\includegraphics{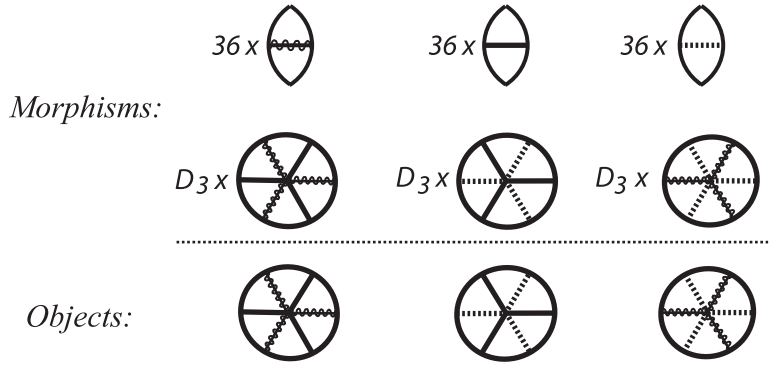}
\caption{Triangular billiard groupoid $\mathcal{TB}$}
\label{fig:TB-groupoid}
\end{figure}
\end{eg}

We emphasize again that  orbigroupoid representations of orbispaces are not unique, and  a single orbispace can be represented by many different orbigroupoids.  Our last example gives an illustration of this.

\begin{eg}\label{e:interval}
Consider the orbispace defined by the interval $I=[0,1]$, with only trivial isotropy.   The simplest representation of this orbispace,  $\calI$, has object space $\calI_0=I$ and arrow space $\calI_1=I$, with the only arrows being identity arrows.    Alternately, we can represent the interval with a `broken' groupoid  $\calI^2$.
The object space $\calI^2_0$ is the disjoint union of two intervals,
$I_L=[0,\frac23)$ and $I_R=(\frac13,1]$.  We glue the subinterval $(\frac13, \frac23)$ (in bold in Figure ~\ref{fig:ints}) on the right end of
$I_L$ to the  subinterval $(\frac13, \frac23)$ (in bold) on the left end of $I_R$ to form $I$.  Hence the arrow
space consists of the identity arrows represented by copies of $I_L$ and $I_R$, and two copies of the  interval
$(\frac13, \frac23)$, one representing the glueing arrows from the subinterval of $I_L$ to the subinterval
of $I_R$, and the other representing their inverses from $I_R$ to $I_L$.
\begin{figure}[h]
\includegraphics{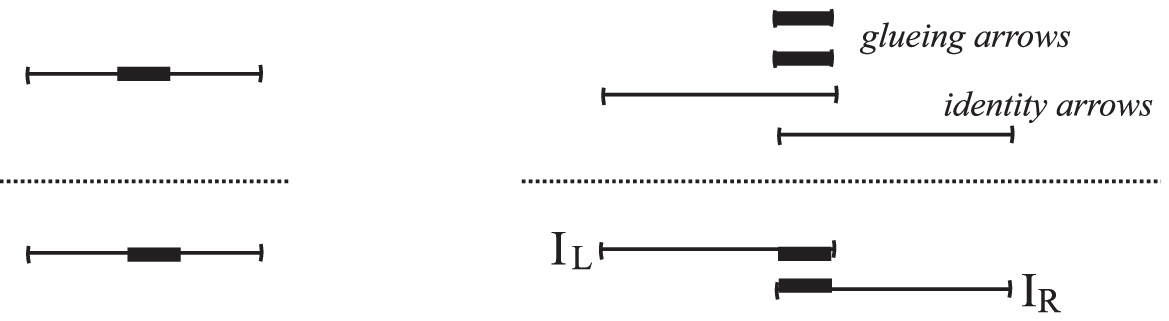}
\caption{Orbigroupoids $\calI$ and $\calI^2$}
\label{fig:ints}
\end{figure}

This can easily be generalized to an orbigroupoid $\calI^n$ representing $I$ in which the interval is broken into $n$ pieces.   For example, we will later use $\calI^3$, where the interval is broken into three pieces.  The object space is $\calI^3_0=[0, \frac12)\coprod (\frac14,\frac34)\coprod (\frac12,1]$.  The arrow space consists of intervals representing the identity arrows, two intervals $(\frac14,\frac12)$ glueing $[0,\frac12)$ to $(\frac14,\frac34)$ and vice versa, and two intervals $(\frac12,\frac34)$ glueing $(\frac14,\frac34)$ to $(\frac12,1]$ and vice versa.
\end{eg}

These examples illustrate how the groupoid keeps track of the ways that points are identified, both to each other and to themselves.  The arrow space encodes both identifications between points in different components of the object space  and local isotropy structure in the same categorical language, with the isotropy structure coming from the ways in which a point is identified to itself.   Orbigroupoids allow us to represent orbispaces by keeping  track not only of what is identified, but the actual number and structure of those identifications.  It is this extra structure that makes orbispaces different from ordinary spaces.

\section{Homomorphisms and Natural Transformations of Orbigroupoids} \label{S:hom}
Next we want to consider maps of orbispaces as represented by homomorphisms of orbigroupoids.

\begin{defn}
      Let $\calG$ and $\calH$ be topological groupoids.
A  {\em homomorphism} $f \colon \calG \to \calH$ is a continuous  functor between topological groupoids.    Specifically, $f$ is defined by two continuous maps,  $f_0\colon \calG_0\rightarrow \calH_0$, and  $f_1\colon \calG_1\rightarrow \calH_1$, which satisfy the functor relations:
\begin{itemize}
\item  If $x\in \calG_0$, then $f_1$ takes the identity map on $x$ in $\calG$ to the identity map on $f_0(x)$ in $\calH$:   $$f_1(u(x))=u(f_0(x)).$$
\item  If $g\in\calG_1$, then $f$ preserves the source and target of $g$:  $$f_0(s(g))=s(f_1(g)),$$  $$f_0(t(g))=t(f_1(g)).$$
\item  If $g_1$ and $g_2$ are two arrows in $\calG_1$ such that $t(g_1) = s(g_2)$, then the arrows $f_1(g_1)$ and $f_1(g_2)$ can be composed in $\calH_1$ since $f$ preserves the source and target.  Moreover,  $f_1$ respects the composition:  $$m(f_1(g_1),f_1(g_2))=f_1(m(g_1,g_2)).$$
\end{itemize}
It follows from the above that $f$ preserves inverses as well.
\end{defn}

A homomorphism of orbigroupoids induces a map between the represented orbispaces:  if two points are identified in $\calG_0$ via some map $g$,
then $f(g)$ identifies their images in $\calH$.  So we get a continuous map on the underlying quotient spaces.
The homomorphism also carries information about the isotropy, since an identification in $\calG$ is mapped
to a specific identification in $\calH$.    The following examples illustrate how the homomorphisms encode  information about how the  map behaves on the isotropy.

\begin{eg}
We examine possible homomorphisms from $*_G$ to the silvered interval $\mathbb{Z}/2 \ltimes S^1$,
our orbispaces from Examples \ref{e:pt} and \ref{e:silvered}.  We will compare the results  for
$G=\mathbb{Z}/3$ and $G=\mathbb{Z}/4$.

For each element $g\in G$, denote the corresponding arrow in $*_G$ by $g$ as well.  Note that if $g$ has order $n$ in the group, then the arrow $g^n$ is the identity arrow in $*_G$.    Since  a
homomorphism $f:*_G\to \mathbb{Z}/2 \ltimes S^1$ respects composition,   
$f_1$ gives a group homomorphism from $G$ to the isotropy group $I_{f(*)}$.  
In particular, if $g^n = 1$, then $(f_1(g))^n $, the composition of $f_1(g)$ with itself $n$ times, must be an identity arrow.

Suppose that $G=\mathbb{Z}/3$, and consider a homomorphism  
$f\colon *_{\mathbb{Z}/3}\to \mathbb{Z}/2 \ltimes S^1$.  Let $y=f_0(*)\in S^1$.   If  $g$ is a non-zero element of $\mathbb{Z}/3$, then
 $g$ has order 3, and $[f(g)]^3=f(g^3)$ is the identity arrow $y \to y$.  The only arrows $h$ with the property that $h^3=1$
have the form  $(1,y)$, and it follows that $f$ must take all arrows to identity arrows.

We get more interesting homomorphisms when $G$ has elements of even order.  Suppose now that $G=\mathbb{Z}/4$ generated by $\sigma$ with $\sigma^4 = 1$,
and that $f:*_{\mathbb{Z}/4}\to \mathbb{Z}/2\ltimes S^1$ is a homomorphism with $y=f_0(*)\in S^1$.
If we want a non-trivial map on arrows, we must have $y$ be
one of our isotropy points $a$ or $b$, and  $f(\sigma)=(\tau ,y)$.  Since $\sigma^3 = \sigma^{-1}$, we also have $f(\sigma^3) = (\tau, y)$.  Thus, we get two
homomorphisms which are non-trivial on
arrows, one with $y=a$ and the other with $y=b$, in addition to the maps that take all arrows to the identity.
\end{eg}

In general, for any finite group $G$ and any orbigroupoid $\calH$, the homomorphisms $f\colon *_G\to\calH$
correspond to a point $f(*)\in \calH_0$ with a group homomorphism into the isotropy group $H_x$ of $x$, $f_1\colon G\to H_x$.

Next we look at how to represent a path in an orbispace, that is, a map from an interval to the orbispace, via a groupoid homomorphism.

\begin{eg}\label{e:ItoB}  Recall $\calI$, an orbigroupoid representing an interval $I$ from Example \ref{e:interval}, and  $\calTB$, the triangular billiard groupoid from Example \ref{billiard}.    We can consider paths in $\calTB$ which can be represented by  homomorphisms $f:\calI\to\calTB$.  Since the object space is connected and $f_0$ is continuous, $f$ will send the entire interval $I$ into one of the disks in the object space of $\calTB$.  Additionally, as the arrow space is also connected, and $f_1$ must be continuous and preserve the identities,  $f_1$ can only send the interval $I=\calI_1$ to the identity component for the disk that $\calI_0$ is mapped into.  So not all paths can be represented by homomorphisms between these particular orbigroupoids.  We will return to this idea in the next section.

\begin{figure}[h]
\includegraphics{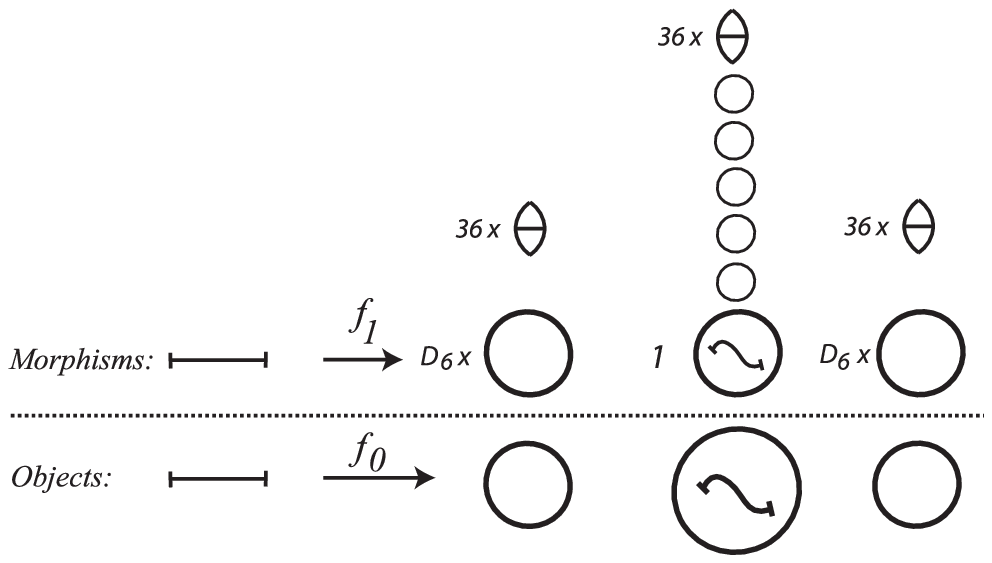}
\caption{Homomorphism $f:\calI \to\mathcal{TB}$  }
%\label{fig:siq}
\end{figure}
\end{eg}

We have shown that the representation of an orbispace by an orbigroupoid is not unique; the same thing is true for maps of orbispaces.  In order to define the category of orbispaces, we will need to look at identifications of homomorphisms.

\begin{defn}
Given two  homomorphisms $f,f'\colon\mathcal{G}\rightarrow\mathcal{H}$, a 2-cell $\alpha\colon f\Rightarrow f'$ is a continuous natural transformation between these functors.
Specifically, $\alpha$ is given by a continuous function $\alpha\colon G_0\rightarrow H_1$
such that $s\circ\alpha=f_0$, $t\circ\alpha=f'_0$ and, for every arrow $g:  x \to y$ in $\calG_1$, the following naturality square
of arrows in $\calH$ is commutative:
$$
\xymatrix{
f_0(x)\ar[r]^{\alpha(x)}\ar[d]_{f_1(g)} &f'_0(x)\ar[d]^{f'_1(g)}
\\
f_0(y)\ar[r]_{\alpha(y)} & f'_0(y)}
$$
In other words, $m(\alpha(x), f'_1(g))=m(f_1(g), \alpha(y))$.
\end{defn}

Two homomorphisms with a natural transformation between them represent the same map between the quotient spaces, since there is an identification arrow between $f_0(x)$ and $f'_0(x)$ for any $x$.

\begin{eg}  \label{e:I2toSI}
We define two homomorphisms, $f$ and $f'$, from the `broken' interval $\calI^2$ (Example \ref{e:interval}) to the silvered interval $\calSI$ (Example \ref{e:silvered}).   The first one,  $f$ is defined so that $f_0$ takes $I_L$ and $I_R$ into the upper portion of the circle in $\calSI_0$, and $f_1$ takes all arrows to the corresponding identity arrows.  The second, $f'$ is defined so that $f'_0$ is the same as $f_0$ on $I_L$, but takes $I_R$ to the reflection of $f_0(I_R)$ in the lower portion of the circle:   if $f_0(x)=y$ for some $y\in\calSI_0$, then $f'_0(x)=f_0(x)$ for $x\in I_L$, and $f'_0(x)=\tau y=\tau f_0(x)$ for $x\in I_R$.  Then $f'_1$ takes the identity arrows to the identity component, as usual, but takes the glueing arrows in $\calI^2_1$ to the $\tau$ component of $\calSI_1$:  if $g:x\to y$ is a glueing (non-identity) arrow in $\calI^2_0$, then $f_1(g)=(1,f_0(x))$ and $f'_1(g)=(\tau,f'_0(x))$.  See Figure \ref{fig:I2toSI}.  It is easy to check that these maps satisfy the functorial requirements for being homomorphisms.

\begin{figure}[h]
\includegraphics{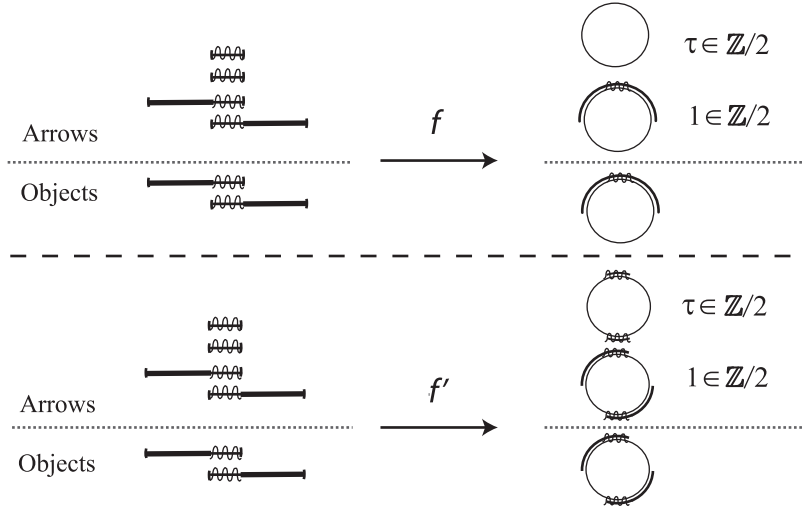}
\caption{Homomorphisms $f$ and $f'$ from $\calI^2$ to $\calSI$ }
\label{fig:I2toSI}
\end{figure}
Now we define a natural transformation $\alpha:f\Rightarrow f'$.  Let $\alpha:\calI^2_0\to\calSI_1$ be such that $\alpha(x)=(1,f_0(x))$ for $x\in I_L$ and $\alpha(x)=(\tau,f_0(x))$ for $x\in I_R$.  Then for $x\in I_L$, $s\circ\alpha(x)=s(1,f_0(x))=f_0(x)$ and $t\circ\alpha(x)=t(1,f_0(x))=f_0(x)=f'_0(x)$.  For $x \in I_R$,
$s\circ\alpha(x)=s(\tau,f_0(x))=f_0(x)$ and $t\circ\alpha(x)=t(\tau,f_0(x))=\tau f_0(x)=f'_0(x)$.  So $\alpha(x)$ is indeed an arrow $f_0(x) \to f'_0(x)$ for each $x$. 

Finally, we need to check that the naturality square commutes for each $g\in\calI^2_1$.  This is clear for any identity arrow of $I_L$, since all arrows are the identity.   So we can consider the commutative squares for an identity arrow $id_x$  with $x\in I_R$, on the left, and a glueing arrow $g:x\to y$ with  $x\in I_L$ and $y\in I_R$, on the right:
\[  \xymatrix {f_0(x) \ar[r]^{(\tau,f_0(x))}\ar[d]_{(1,f_0(x))} &
{\tau f_0(x)}\ar[d]^{(1,\tau f_0(x))} &\qquad &
f_0(x) \ar[r]^{(1,f_0(x))}\ar[d]_{(1,f_0(x))} &
f_0(x)\ar[d]^{(\tau, f_0(x))}\\
 f_0(x) \ar[r]_{(\tau ,f_0(x))} &
{\tau  f_0(x)} & &
 f_0(x) \ar[r]_{(\tau,f_0(x))} &
 \tau f_0(x)
 } \]
 \noindent and see that both these squares commute.    
The final case, for a glueing arrow $g:x\to y$ with $x\in I_R$ and $y\in I_L$ is similar to the right square.
 Notice that  both $f$ and $f'$ map the interval continuously onto the same path in the quotient space.
\end{eg}

Even though homomorphisms with a natural transformation between them can be thought of as `the same' map of 
orbispaces, we do not want to simply identify them.   In examining the structure of maps between orbispaces, 
we will also  want to take into consideration  how many ways homomorphisms are identified.    The ways in 
which a homomorphism can be identified to itself via a natural transformation can be used to give the maps themselves isotropy structure.    In order to retain this singularity structure, 
we will be remembering identifications and working with the 2-category of orbigroupoids,  together with 
their homomorphisms and natural transformations.

\section{Representing Orbispaces with Orbigroupoids}\label{d:orbispaces}

Now we begin to  build the category of orbispaces, which is our primary interest.
As demonstrated above, every orbigroupoid defines an orbispace, but this definition is not unique:
it is possible to have  the same quotient space and the same local isotropy structure with
two different groupoid representations.  This ambiguity is made precise via an equivalence of categories, again suitably topologized.
\begin{defn}
An  {\em essential equivalence} of topological groupoids  is a homomorphism $f\colon \calG \to \calH$ satisfying the two conditions:
 \begin{itemize}
\item[E1]
$f$ is {\em essentially surjective on objects} in the sense that $t\circ\pi_2$ is
an open surjection:
$$
\xymatrix@R=1em{
\calG_0\times_{\calH_0}\calH_1\ar[d]_{\pi_1}\ar[r]^-{\pi_2}&\calH_1\ar[d]^s\ar[r]^-t & \calH_0
\\
\calG_0\ar[r]_{f_0} & \calH_0\rlap{ }
}
$$
i.e.,   for each object $y\in \calH_0$, there is an object $x\in\calG_0$ and an arrow
(actually, an isomorphism since we are working with groupoids) $h\in \calH_1$ from $f_0(x)$ to $y$.
\item[E2]
$f$ is {\em  fully faithful} in the sense that the following diagram is a pullback:
$$
\xymatrix@R=1em{
\calG_1\ar[r]^{f_1}\ar[d]_{(s,t)} & \calH_1\ar[d]^{(s,t)}
\\
\calG_0\times \calG_0\ar[r]^{f_0\times f_0} & \calH_0\times \calH_0
}
$$
i.e.,   for any two objects of $\calH_0$
the identifications between them in $\calH_1$ are isomorphic to those between any pre-images in $\calG$.  In particular,  $f$ is an isomorphism on isotropy groups for all objects.\\
\end{itemize}
\end{defn}

Note that condition E1 ensures that an essential equivalence between groupoids produces a homeomorphism on their quotient spaces, and condition E2 means that the isotropy information about the singularities is also preserved.  Therefore an essential equivalence represents an isomorphism on orbispaces.
 Any two orbigroupoids $\calG, \calH$ that can be connected with a zig-zag of essential equivalences $\calG \leftarrow \calK_1 \to \calK_2 \leftarrow \calK_3 \to \cdots \leftarrow \calK_n \to \calH$ are called
Morita equivalent, and represent the same orbispace.  It was shown in \cite{Pr-comp}  that 
in fact two \'etale groupoids $\calG$  and $\calH$ are Morita equivalent if and only if there is a zig-zag of essential equivalences 
$\calG\leftarrow\calK\rightarrow\calH$;  we only need one zig-zag, and any longer list of zig-zags can be shortened.

The next example shows that our various representations of the interval $I = [0,1]$ from Example \ref{e:interval} are Morita equivalent.

 \begin{eg}\label{e:equivinterval}
Recall that we have defined two different orbigroupoids, the `unbroken' $\calI$ and the `broken' $\calI^2$, representing the closed unit interval $I=[0,1]$ (Example \ref{e:interval}).  Here we show that there is an essential equivalence between them.  Let $f:\calI^2\to \calI$ be the homomorphism such that $f_0$ maps $I_L$ in $\calI^2_0$ to
the left side of the interval $\calI_0$, mapping the bold subinterval of $I_L$ to the bold subinterval of
$I_0$, and mapping $I_R$ to  the right side of $\calI_0$ in a similar way.  Since $\calI_1$ consists only of identity arrows,
every arrow is $\calI^2_1$ is mapped to the corresponding identity arrow.  To see
that $f$ is an essential equivalence, first note that $f$ is surjective, and hence essentially surjective
on objects.  To see that  $f$ is fully faithful, let $y, y' \in \calI_0$.  Suppose
$x$ and $x'$ are preimages of $y$ and $y'$, respectively, under $f_0$.  If $y\neq y'$, then there are no arrows either $y\to y'$ or $x\to x'$.    If $y=y'$, then there is only the identity arrow from $y$ to $y'$.  If $x = x'$, then there is again only the identity arrow;  if $x \neq x'$, then $x$ and $x'$ are in different components $I_L$ and $I_R$ of $\calI^2_0$, and again there is  exactly one arrow $g: x\to x'$ which identifies $x$ to $x'$.  In all cases, the sets of arrows in $\calI^2_1$ and $\calI_1$ are isomorphic via $f_1$, since there is always only one.

It is easy to create a similar essential equivalence from  $\calI^n$ to $\calI$, so all of these orbigroupoids are Morita equivalent.
\end{eg}

We create a category of orbispaces from the category of orbigroupoids by inverting the Morita equivalences between orbigroupoids.
The way to do this is via a  bicategory of fractions $\Orbigrpds(W^{-1})$ where $W$ is the class of
essential equivalences \cite{Pr-comp}.  In this construction,
an arrow from $\calG$ to $\calH$ is given by a span of groupoid homomorphisms
$$
\xymatrix{
\calG&\ar[l]_\upsilon\ar[r]^\varphi\calK&\calH}
$$
where $\upsilon$ is an essential equivalence. Such a span is also called a {\em generalized map}
from $\calG$ to $\calH$.

This definition reflects the fact  that there are certain maps between orbispaces that can only be
carried  by certain representing orbigroupoids.
So even though two orbigroupoids may be Morita equivalent, the homomorphisms out of them
are not the same.  In order to represent a map from the orbispace represented by
$\calG$ to the orbispace represented by $\calH$, we may need to replace the original
representing orbigroupoid $\calG$ with an alternate $\calK$ representing the same orbispace,
but which can be used to define the desired map via a span as above.    The following gives an example of such a map.

\begin{eg}\label{ItoT} We would like a map from the interval orbispace $I$ with trivial isotropy to the teardrop,  whose image is a path
crossing from the lower portion of the teardrop to the upper portion, as in Figure \ref{fig:teardrop-path}.
\begin{figure}[h]
\includegraphics{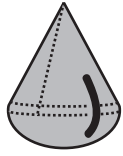}
\caption{Path in the teardrop orbispace}
\label{fig:teardrop-path}
\end{figure}

We represent $I$ by the orbigroupoid $\calI$, and the teardrop
by the orbigroupoid $\calT$ from Example \ref{teardrop}.    However, no homomorphism
$\calI \to \calT$ can produce the desired path in the teardrop,  since the image in  $\calT_0$ is partially in one connected
component and partially in the other and so the map from objects $\calI_0$ would fail to be continuous.
  On the other hand, we can create a homomorphism representing this path  if we represent  the interval orbispace $I$ by the
orbigroupoid $\calI^2$ instead; the desired homomorphism $\calI^2$ to the teardrop is illustrated in
Figure \ref{fig:map-interval-teardrop}, where the overlap in $\calI^2$ allows us to move between components of $\calT_0$.  
Note that the interval in the arrow space $\calI^2$ that represents the glueing of the two components gets sent by the 
homomorphism to arrows in $\calT$ which identify the images of the overlap.  
Thus, this map is represented by a span $\calI \leftarrow \calI^2 \to \calT$, where the map $\calI\leftarrow\calI^2$ 
is the one described in Example \ref{e:equivinterval}.

\begin{figure}[h]
\includegraphics{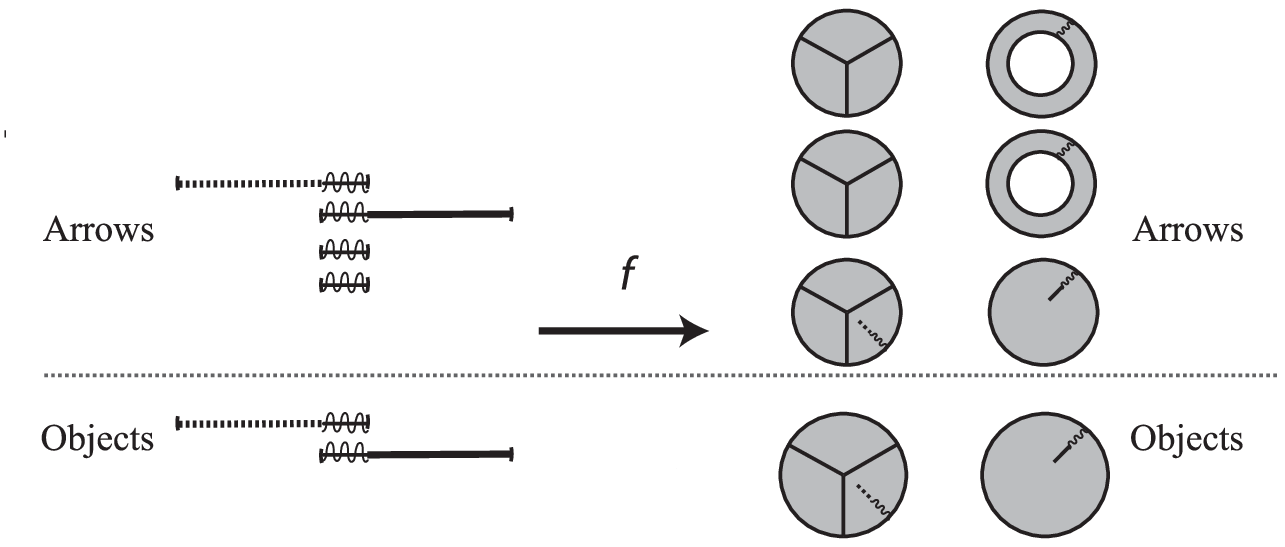}
\caption{Map from $\calI^2$ to $\calT$}
\label{fig:map-interval-teardrop}
\end{figure}

%\vspace{-2pt}
\end{eg}

The choice of representing orbigroupoid matters in the codomain as well, as the next example shows.

\begin{eg} \label{e:codomain}
In Example \ref{e:silvered}, we represented the silvered interval by the translation groupoid $\calSI={\mathbb Z}/2\ltimes S^1$.  We can also represent the silvered interval by a `broken' version similar to the `broken' versions of the interval.   Specifically, $\calSI^2_0$ is the disjoint union of two open intervals, and $\calSI^2_1$ consists of the identity arrows, arrows representing a $\mathbb{Z}/2$-folding action on each interval in the object space, and glueing arrows from the ends of one interval in the object space to the other.  See Figure \ref{fig:SI2}.

\begin{figure}[h]
\centering
\includegraphics{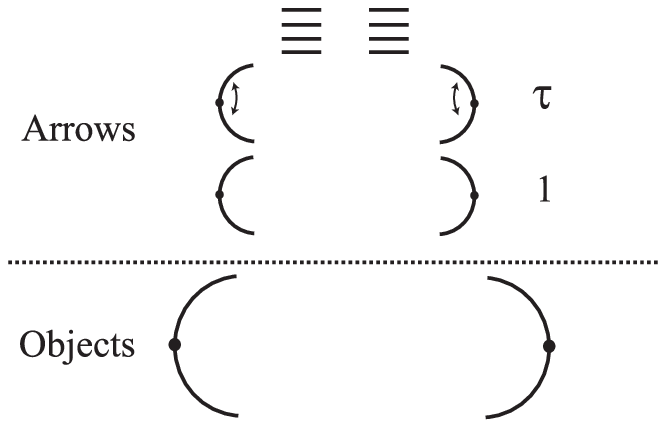}
\caption{Orbigroupoid $\calSI^2$ representing the silvered interval}
\label{fig:SI2}
\end{figure}

Now, consider the path $f:\calI\to\calSI$ in the silvered interval given by mapping the entire interval in $\calI_0$ to the entire upper half of $S^1=\calSI_0$.  Notice that the path in the orbispace includes both endpoints of the orbispace (the two points with $\mathbb{Z}/2$-isotropy).  This path cannot be achieved by a map $f':\calI\to\calSI^2$.  If such a map existed, the continuous map $f'_0:I\to \calSI^2_0$ would have to map the connected interval $I$ to only one connected component of $\calSI^2_0$.  The resulting path in the orbispace would include only one of the endpoints.  Thus, there is no such path $f':\calI\to\calSI^2$.
\end{eg}

Next we need to consider identifications between generalized maps.  A 2-cell between generalized maps is represented by an equivalence class of  diagrams of the following
form
$$
\xymatrix@C=3em{
&\calK\ar[dl]_{\upsilon}\ar[dr]^{\varphi}
\\
\calG\ar@{}[r]|{\alpha\Downarrow} &\calL\ar[u]_{\nu}\ar[d]^{\nu'}\ar@{}[r]|{\beta_\Downarrow} &\calH
\\
&\calK'\ar[ul]^{\upsilon'}\ar[ur]_{\varphi'}
}
$$

The upcoming example gives two generalized maps with a 2-cell between them.

\begin{eg} \  We start with the generalized maps, given by spans.    We saw above in Example \ref{ItoT} that creating a span $\calI \leftarrow \calI^2 \to \calT$ allows us to make a generalized map from $\calI$ with image in two different charts, using the overlap in $\calI^2$ to jump between charts.   Here we show  two spans which put the jump in different places, which  have a 2-cell between them and represent the same map on the quotient spaces.

Recall that in Example \ref{e:equivinterval} we glued together the two intervals in the object space of $\calI^2$ to create an essential equivalence $f:\calI^2\to \calI$.  We do the same thing to $\calI^3$ in two different ways.  We can glue the middle and right intervals in the object space together giving a map that we will call $\nu:\calI^3 \to \mathcal{RI}^2$.  Alternatively, we can glue the left and middle intervals together, producing a map $\nu':\calI^3\to\mathcal{LI}^2$.  This gives two representations of the interval, $\mathcal{RI}^2$ and $\mathcal{LI}^2$, which are broken in different places, with  essential equivalences $\upsilon:\mathcal{RI}^2\to\calI$ and $\upsilon':\mathcal{LI}^2\to\calI$ defined by mapping down to the `unbroken' interval as in Example \ref{e:equivinterval}.

Now we define two maps $\varphi:\mathcal{RI}^2\to\calTB$ and $\varphi':\mathcal{LI}^2\to\calTB$.   On objects, $\varphi$ maps the two `broken' intervals in $\mathcal{RI}^2$ to paths in two different disks, labeled disk $A$ and disk $B$ in Figure ~\ref{fig:phibilliard}, in the object space of $\calTB$.  (Only the pertinent parts of $\calTB$ are shown in the Figure.)    The glueing arrows are mapped to the appropriate overlaps (of the form $i\circ \sigma$ to account for the necessary reflection) in $\calTB_0$ with domain disk $A$ and codomain disk $B$, or the inverse with domain disk $B$ and codomain disk $A$ (see Example \ref{billiard}).  The overlaps will then glue the ends of the two paths together in the resulting orbispace.
\begin{figure}[ht!]
\centering
\includegraphics{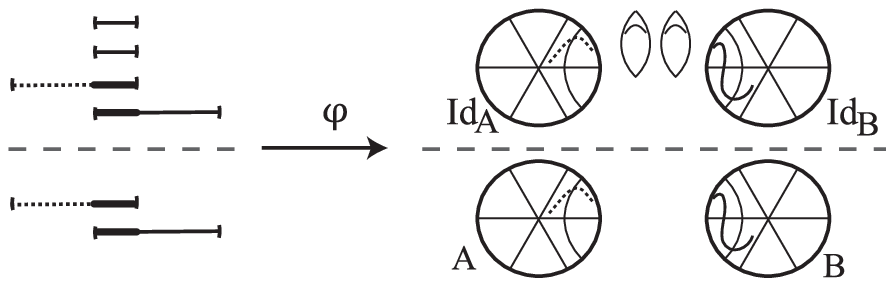}
\caption{$\varphi:\mathcal{RI}^2\to\calTB$}
\label{fig:phibilliard}
\end{figure}

The map $\varphi'$ is similar to $\varphi$ except that the jump between charts occurs in a different place, to match  the different overlap in  $\mathcal{LI}^2$.  See Figure ~\ref{fig:phi'billiard}.
\begin{figure}[ht!]
\centering
\includegraphics{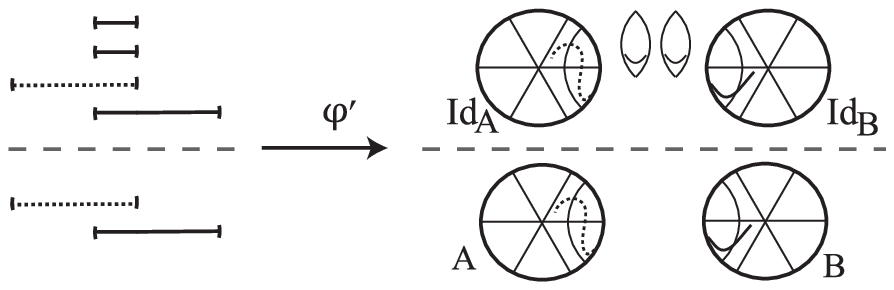}
\caption{$\varphi':\mathcal{LI}^2\to\calTB$}
\label{fig:phi'billiard}
\end{figure}
 This gives us two generalized maps $\calI \to \calTB$ defined by
 $
\xymatrix{
\calI&\ar[l]_\upsilon \ar[r]^\varphi\mathcal{RI}^2 &\calTB}
$
and\\
$
\xymatrix{
\calI&\ar[l]_{\upsilon'} \ar[r]^{\varphi'} \mathcal{LI}^2 &\calTB}
$.
These  produce the same path in the orbispace for $\calTB$, shown in Figure ~\ref{fig:seagull}.
\begin{figure}[ht!]
\centering
\includegraphics{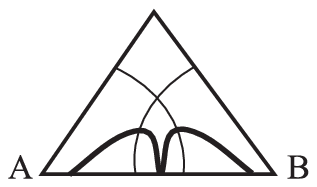}
\caption{Path in $\calTB$}
\label{fig:seagull}
\end{figure}

We create a  2-cell between these two spans to identify them.
$$
\xymatrix@C=3em{
&\mathcal{RI}^2 \ar[dl]_{\upsilon}\ar[dr]^{\varphi}
\\
\calI \ar@{}[r]|{\alpha\Downarrow} &\calI^3 \ar[u]_{\nu}\ar[d]^{\nu'}\ar@{}[r]|{\beta_\Downarrow} &\calTB
\\
&\mathcal{LI}^2 \ar[ul]^{\upsilon'}\ar[ur]_{\varphi'}
}
$$
First, note that $ \upsilon\nu= \upsilon'\nu' : \calI^3_0\to\calI_1$, and so we can fill in the left hand side of the above diagram with the identity natural transformation $\alpha$.
The natural transformation $\beta: \varphi\nu \Rightarrow \varphi'\nu'$ is given by the continuous map $\calI^3_0 \to \calTB_1$ illustrated in Figure ~\ref{fig:phibetaphi'}.  The overlap shown is the one with domain disk $B$ and codomain disk $A$, the same as in the previous two Figures.
\begin{figure}[ht!]
\centering
\includegraphics{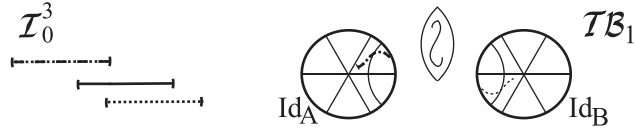}
\caption{Natural transformation $\beta$}
\label{fig:phibetaphi'}
\end{figure}
\end{eg}

We see in this example that a 2-cell between two generalized maps requires a groupoid representation for the domain orbispace
with essential equivalences into the groupoid representations used as the middle of the the two generalized maps.  
This situation is analogous to maps between manifolds, where one can define the maps on conveniently chosen atlases,
but if we want to compare the maps, we need a common refinement.

We consider orbispaces via a 2-category of orbigroupoids with  morphisms given by generalized maps and $2$-cells as described above.    In order to avoid higher structure, we put the following equivalence on the $2$-cells.  
The two diagrams 
$$
\xymatrix@C=3em{
&\calK\ar[dl]_{\upsilon}\ar[dr]^{\varphi} &&&&\calK\ar[dl]_{\upsilon}\ar[dr]^{\varphi}
\\
\calG\ar@{}[r]|{\alpha_1\Downarrow} &\calL_1\ar[u]_{\nu_1}\ar[d]^{\nu'_1}\ar@{}[r]|{\beta_1\Downarrow} &\calH & \mbox{and} &\calG\ar@{}[r]|{\alpha_2\Downarrow} &\calL_2\ar[u]_{\nu_2}\ar[d]^{\nu'_2}\ar@{}[r]|{\beta_2\Downarrow} &\calH
\\
&\calK'\ar[ul]^{\upsilon'}\ar[ur]_{\varphi'}&&&&\calK'\ar[ul]^{\upsilon'}\ar[ur]_{\varphi'}
}
$$
represent the same 2-cell  when there are essential equivalences and 2-cells
as in the following:
\begin{equation}\label{d:equivreln}
\xymatrix@R=3em{
&\calK
\\
\calL_1 \ar[ur]^{\nu_1} \ar[dr]_{\nu_1'}
		&	\ar[l]_{\lambda_1} \calM \ar[r]^{\lambda_2} \ar@{}[u]|{\stackrel{\gamma}{\Leftarrow}}
			\ar@{}[d]|{\stackrel{\gamma'}{\Leftarrow}}
		&\calL_2 \ar[ul]_{\nu_2} \ar[dl]^{\nu'_2}
\\
&\calK'
}
\end{equation}
such that the composite of the pasting diagram
$$
\xymatrix@R=3em@C=3em{
&\calK\ar[drr]^\upsilon
\\
\calL_1 \ar[dr]_{\nu_1'}
		&	\ar[l]_{\lambda_1} \calM \ar[r]^{\lambda_2}
			\ar@{}[d]|{\stackrel{\gamma'}{\Leftarrow}}
		&\calL_2 \ar@{}[r]|{\stackrel{\alpha_2}{\Downarrow}}  \ar[ul]^{\nu_2} \ar[dl]_{\nu'_2} & \calG
\\
&\calK'\ar[urr]_{\upsilon'}
}
$$
is equal to the composite of the pasting diagram
$$
\xymatrix@R=3em@C=3em{
&\calK\ar[drr]^\upsilon
\\
\calL_2 \ar[ur]^{\nu_2}
		&	\ar[l]_{\lambda_2} \calM \ar[r]^{\lambda_1} \ar@{}[u]|{\stackrel{\gamma}{\Rightarrow}}
		&\calL_1 \ar@{}[r]|{\stackrel{\alpha_1}{\Downarrow}}  \ar[ul]^{\nu_1} \ar[dl]_{\nu'_1} & \calG
\\
&\calK'\ar[urr]_{\upsilon'}
}
$$
and the composite of the pasting diagram
$$
\xymatrix@R=3em@C=3em{
&\calK\ar[drr]^\varphi
\\
\calL_1  \ar[dr]_{\nu_1'}
		&	\ar[l]_{\lambda_1} \calM \ar[r]^{\lambda_2}  \ar@{}[d]|{\stackrel{\gamma'}{\Leftarrow}}
		&\calL_2 \ar@{}[r]|{\stackrel{\beta_2}{\Downarrow}} \ar[ul]^{\nu_2} \ar[dl]_{\nu'_2}& \calH
\\
&\calK'\ar[urr]_{\varphi'}
}
$$
is equal to the composite of the pasting diagram
$$
\xymatrix@R=3em@C=3em{
&\calK\ar[drr]^\varphi
\\
\calL_2 \ar[ur]^{\nu_2}
		&	\ar[l]_{\lambda_2} \calM \ar[r]^{\lambda_1} \ar@{}[u]|{\stackrel{\gamma}{\Rightarrow}}
		&\calL_1 \ar@{}[r]|{\stackrel{\beta_1}{\Downarrow}} \ar[ul]^{\nu_1}\ar[dl]_{\nu_1'}& \calH
\\
&\calK'\ar[urr]_{\varphi'}
}
$$
Intuitively, two 2-cell diagrams represent the same 2-cell when there is a further Morita equivalent
groupoid for the domain so that the original 2-cells agree on this further groupoid representation.   For instance, if  two paths into an orbigroupoid $\calH$ 
are defined using two distinct subdivisions of the unit interval, giving rise to orbigroupoids $\calI^3$ and $\calI^4$ say,
a 2-cell diagram could represent  a subdivision which is a common refinement of the two subdivisions given.
In this case,  two 2-cell diagrams would represent the same 2-cell if there is a further subdivision on which the 2-cells become the same.  

We define a $2$-category of orbispaces with objects the orbigroupoids, morphisms given by generalized maps and $2$-cells given by the equivalence classes of the 2-cell diagrams as described here.     Two generalized maps are considered to represent the same map of orbispaces if they have a 2-cell between them, but when we use these to create a mapping space which itself has the structure of an orbispace, we will need to retain information about the ways in which these generalized maps were identified.  We will illustrate this process in the next section.

\section{Mapping Spaces for Orbigroupoids: Defining  $\GMap(\calG, \calH)$}\label{S:GMap}

In this section, we construct a mapping groupoid $\GMap(\calG, \calH)$ between two orbigroupoids.   
This will be based on the definitions from Section \ref{S:hom}, with the maps defined by homomorphisms 
and identifications between maps given by natural transformations;  we explain how to topologize this 
structure so that we obtain a topological groupoid.    We will show examples where this again becomes an orbigroupoid, hence representing an orbispace structure on the mapping object.    Actually constructing the correct mapping space for any orbispaces using these mapping groupoids  takes some technical work which is not addressed here, but is detailed in the paper \cite{PS-tocome}.  In this paper, we merely illustrate how the orbispace structure comes into existence in these basic building blocks.

The topology we will put on our mapping groupoid is based  on topologies defined for various mapping spaces.  Topological spaces as a category are not Cartesian closed, and so various substitute categories are commonly used instead.  One  standard approach is to work with compactly generated spaces.  However, if we use the compact open topology to define a topology on a mapping space ${\rm Top}(X,Y)$, this may not be compactly generated even when $X$ and $Y$ are.    This is commonly fixed by applying the $k$-functor, which adds in some open and closed sets to the topology and makes $k{\rm Top}(X, Y)$ compactly generated again.  We will use this standard dodge in defining the topology on our orbigroupoids.  Therefore for any of the spaces that follow, $\Map(X,Y)$ will denote the topological space $k{\rm Top}(X,Y)$ obtained by taking the compact open topology and then applying the $k$-functor.  See \cite{steenrod, whitehead} for further details.

The mapping groupoid $\GMap(\calG, \calH)$  is defined as follows.

{\bf Objects}  The objects of $\GMap(\calG, \calH)_0$ are the  homomorphisms
 $\mathcal{G} \to \mathcal{H}$.
As defined in Section \ref{S:hom}, such a functor $f$ is defined by a map on objects $f_0:  \calG_0 \to \calH_0$ and a
map on arrows $f_1:\calG_1 \to \calH_1$.
We  topologize these  as a  subspace of $\Map(\calG_0,\calH_0)\times \Map(\calG_1, \calH_1)$
with the $k$-ified compact open topology.
The subspace taken will be those pairs of maps which make all the necessary diagrams commute:
$sf_1 = f_0s$, $tf_1 = f_0t$, $m(f_1 \times f_1) = f_1m$, and  $uf_0 = f_1u$.
So,
\begin{eqnarray*}
\GMap(\calG,\calH)_0&=&\{(f_0,f_1)\in\Map(\calG_0,\calH_0)\times \Map(\calG_1, \calH_1)|\\
&&sf_1 = f_0s, tf_1 = f_0t, m(f_1 \times f_1) = f_1m, uf_0 = f_1u\}
\end{eqnarray*}
This is a closed subspace of $\Map(\calG_0,\calH_0)\times \Map(\calG_1, \calH_1)$
Notice that in this subspace,
 $f_0$ is completely determined by $f_1$,
since we can view $\calG_0$ as a subspace of $\calG_1$ via the map to the identity arrows $u$.
So we can think of a point in this space as being defined by just  an $f_1$ which preserves composition and
maps $u(\calG_0)$ to $u(\calH_0)$, and  represent this same subspace as
$$\GMap(\calG,\calH)_0=\{f\in\Map(\calG_1,\calH_1)|\,m(f\times f) = fm,\,f(u(\calG_0))\subseteq u(\calH_0)\},$$
equipped with the subspace topology.  In this representation, we can see that this is a closed supbspace because 
 $\calH$ is \'etale.

{\bf Arrows}  The arrows of the mapping groupoid $\GMap(\calG, \calH)_1$  are defined
 by natural transformations between the functors which are its objects.
Explicitly, a natural transformation $\alpha\colon f \to f'$  is given by
$\alpha \in \Map(\calG_0, \calH_1)$  making the following diagram commute:
for $g\colon  x\to y $ in $\calG_1$,
 \begin{equation}\label{natlsq}
\xymatrix{
f_0(x) \ar[r]^{\alpha(x)} \ar[d]_{f_1(g)}   & f'_0(x) \ar[d]^{f'_1(g)} \\
f_0(y) \ar[r]_{\alpha(y)} & f'_0(y)}
\end{equation}
We can think of this as a triple $(f,\alpha,f')$ 
where  $f = (f_0, f_1)$ and $f' = (f'_0, f'_1)$ satisfy the functor relations,
and in addition,  $(f, \alpha, f')$ satisfies  $m(\alpha(x), f'_1(g)) = m(f_1(g), \alpha(y))$ (and this equation makes sense, i.e.
$s\alpha=f_0$ and $t\alpha=f'_0$).  So we are looking at a subspace of the space
\begin{eqnarray}
[\Map(\calG_0, \calH_0) \times \Map(\calG_1, \calH_1)]
&\times _{\Map(\calG_0, \calH_0) }& \Map(\calG_0, \calH_1) \notag\\
&\times_{\Map(\calG_0, \calH_0) } &
[ \Map(\calG_0, \calH_0) \times \Map(\calG_1, \calH_1) \label{bigspace}]
\end{eqnarray}
 with the $k$-ified compact open topology.

Now we look closer at the space of arrows of  $\GMap(\calG, \calH)$.
Explicitly, the elements of the subspace of natural transformations  can be written
as $(f_0, f_1, \alpha, f'_0, f'_1)$ such that the following holds:
 \begin{itemize}
 \item $s \alpha = f_0$ and $t \alpha = f'_0$
 \item  $m(\alpha s, f'_1) = m(f_1, \alpha t)$
 \item $sf_1 = f_0s, tf_1 = f_0t, m(f_1 \times f_1) = f_1m,  uf_0 = f_1u$    (i.e. $f$ is a functor)
  \item $sf'_1 = f'_0s, tf'_1 = f'_0t, m(f'_1 \times f'_1) = f'_1m, uf'_0 = f'_1u$    (i.e. $f'$ is a functor)
 \end{itemize}
 Note that $\GMap(\calG,\calH)_1$ is a closed subspace of (\ref{bigspace}).
Again, we notice that parts of such 5-tuples completely determine the rest, just as was the case for the space of objects.
% A more concise  description of this space will be given in Lemma \ref{L:slim} below.

{\bf Structure Maps} Now we look at the structure maps of the groupoid $\GMap(\calG, \calH)$.
\begin{itemize}
\item The {\em source map} $s\colon \GMap(\calG, \calH)_1 \to \GMap(\calG, \calH)_0$ is defined by
$$s(f_0, f_1, \alpha, f_0', f_1') = (f_0, f_1).$$
\item Similarly, the {\em target map} $t\colon \GMap(\calG, \calH)_1 \to \GMap(\calG, \calH)_0$ is defined by
$$t(f_0, f_1, \alpha, f_0', f_1') = (f'_0, f'_1).$$
\item
 The {\em unit map} $u: \GMap(\calG, \calH)_0 \to \GMap(\calG, \calH)_1$  is defined by
$$u(f_0, f_1) = (f_0, f_1, u, f_0, f_1).$$
\item The {\em composition map}
$$m\colon \GMap(\calG, \calH)_1 \times_{\GMap(\calG, \calH)_0} \GMap(\calG, \calH)_1 \to  \GMap(\calG, \calH)_1$$
is defined by $$m((f_0, f_1, \alpha , f_0', f_1') , (f_0', f_1', \beta, f_0'', f_1'')) = (f_0, f_1, m(\alpha, \beta), f_0'', f_1'').$$
\item The {\em inverse map} $i\colon \GMap(\calG,\calH)_1\to\GMap(\calG,\calH)_1$ is defined by
$$i(f_0, f_1, \alpha, f_0', f_1') = (f'_0, f'_1, i\alpha, f_0,  f_1).$$
\end{itemize}
It is clear from the definitions that each of these will be continuous,
since they are defined from continuous maps on the various components.
Hence,  $\GMap(\calG, \calH)$ is a topological groupoid.

This groupoid may not be an orbigroupoid, but in many cases it is.   So we have a way of defining a mapping orbispace which  carries the isotropy structure on the maps.  This may not be the correct mapping space in the 2-category of orbispaces because of the need to consider generalized maps.  However, the basic orbispace structure carried by these  mapping groupoids $\GMap(\calG, \calH)$ is what is used to create the mapping orbispace of \cite{PS-tocome}.    Therefore we finish this section by carefully working through several examples.  

The first example comes from the following observation.  

\begin{prop} \label{p:e-to-G}
There is an isomorphism of topological groupoids $$\GMap(*_1,\G)\cong\G.$$ \end{prop}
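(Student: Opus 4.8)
The plan is to exhibit an explicit isomorphism $\Phi\colon\GMap(*_1,\G)\to\G$ and verify it is a homeomorphism on objects and on arrows which intertwines all five structure maps. The one fact doing the real work is that mapping out of a point is trivial: evaluation at the point is a homeomorphism $\Map(*,X)\cong X$ for every space $X$, and this survives $k$-ification since a point is compact. We use this to replace each of $\Map((*_1)_0,\G_0)$ and $\Map((*_1)_1,\G_1)$ by $\G_0$ and $\G_1$ respectively.

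First I would treat the objects. A homomorphism $f\colon *_1\to\G$ has $f_0(*)=x$ for some $x\in\G_0$, and the functor relation $uf_0=f_1u$ forces $f_1$ to send the unique arrow of $*_1$ to $u(x)$; so $f$ is determined by $x$. Feeding the case $\calG=*_1$ into the defining equations $sf_1=f_0s$, $tf_1=f_0t$, $m(f_1\times f_1)=f_1m$, $uf_0=f_1u$ shows that, under the identifications above, $\GMap(*_1,\G)_0$ is precisely the graph $\{(x,u(x)):x\in\G_0\}$ inside $\Map((*_1)_0,\G_0)\times\Map((*_1)_1,\G_1)\cong\G_0\times\G_1$ (the remaining equations being automatic). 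Projection onto the first factor is then a continuous bijection onto $\G_0$ whose inverse $x\mapsto(x,u(x))$ is continuous because $u$ is; so $\GMap(*_1,\G)_0\cong\G_0$.

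Next, the arrows. A natural transformation $\alpha\colon f\Rightarrow f'$ between homomorphisms $*_1\to\G$ is a single arrow $\alpha(*)=g\in\G_1$; the conditions $s\alpha=f_0$, $t\alpha=f'_0$ pin down $f_0(*)=s(g)$ and $f'_0(*)=t(g)$ (hence $f,f'$ entirely), and the naturality square over the unique arrow of $*_1$ is automatic. Thus $\GMap(*_1,\G)_1$ is the graph of the continuous map $g\mapsto(s(g),u(s(g)),g,t(g),u(t(g)))$ inside the space (\ref{bigspace}), and the assignment $(f_0,f_1,\alpha,f'_0,f'_1)\mapsto\alpha(*)$ is a homeomorphism $\GMap(*_1,\G)_1\cong\G_1$. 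It then remains to check that $\Phi$ respects structure: under these identifications the source and target maps of $\GMap(*_1,\G)$ become $g\mapsto s(g)$ and $g\mapsto t(g)$; the unit map becomes $x\mapsto u(x)$ (the identity $2$-cell on $f$ has $\alpha(*)=u(f_0(*))$); $m(\alpha,\beta)$ evaluated at $*$ is $m(\alpha(*),\beta(*))$, i.e.\ composition in $\G$ (the composability condition $t(g)=s(g')$ matching the pullback defining composition in $\GMap$); and the inverse sends $g=\alpha(*)$ to $(i\alpha)(*)=g^{-1}$. Each of these is a direct substitution into the formulas defining the structure maps of $\GMap$, so $\Phi$ is an isomorphism of topological groupoids.

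There is no genuine difficulty here; the only thing needing care is the topological bookkeeping — confirming that $\Map(*,X)\cong X$ is preserved by $k$-ification, and that the defining equations cut $\GMap(*_1,\G)_0$ and $\GMap(*_1,\G)_1$ out as graphs of continuous maps, so that the coordinate projections are honest homeomorphisms and not merely continuous bijections. Everything else is unwinding definitions.
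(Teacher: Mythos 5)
Your proof is correct and follows essentially the same route as the paper's: identify a homomorphism $*_1\to\G$ with the point $f_0(*)$ (the functor relations forcing $f_1$ to hit the identity arrow), identify a natural transformation with the single arrow $\alpha(*)$ (the naturality square being automatic), and observe that the subspace topologies make these bijections homeomorphisms. Your additional verification that the five structure maps are intertwined is a welcome bit of thoroughness that the paper leaves implicit.
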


This result follows from the fact that the category of topological groupoids in compactly generated spaces is Cartesian closed,
but we opt to include a proof to illustrate the process of creating these mapping groupoids, and as a  first  example of the more general case of calculating $\GMap(*_G,\calG)$ where $G$ is an arbitrary finite group.

\begin{proof}
The objects of $\GMap(*_1,\calG)_0$ are homomorphisms $f:*_1\to\calG$ 
defined on the objects, $f_0:*\to\calG_0$, and on the arrows, $f_1:1\to\calG_1$.  
Now $f_0$ can map $*$ into any point $y\in\calG_0$, and this determines the map on the arrow, $f_1(1)=id_y$.  
Thus the objects $f$ in $\GMap(*_1,\calG)$ are in one-to-one correspondence with the objects $y$ of $\calG$.  
Moreover, the topology is defined using the subspace topology on the mapping space 
$\Map(*, \calG_0) \times \Map(1, \calG_1) $ with $uf_0 = f_1u$, which means that the correspondence 
$f \to y$ is a homeomorphism.  

The arrows of $\GMap(*_1,\calG)_1$ consist of natural transformations between homomorphisms.   If $f, f'\in\GMap(*_1,\calG)_0$, a natural transformation $\alpha:f\Rightarrow f'$ is given by $\alpha(*)=g\in\calG_1$ such that $s\circ\alpha(*)=y=f_0(*)$, $t\circ\alpha(*)=y'=f'_0(*)$, and the following diagram commutes:
\[ \xymatrix{y\ar[r]^{g}\ar[d]_{f_1(1)=id_y} & y'\ar[d]^{f'_1(1)=id_{y'}}\\
y\ar[r]_{g} & y'
} \]
Since this diagram always commutes, the existence of $\alpha$ depends only on the existence of an arrow $g:f_0(*)\to f'_0(*)$ in $\calG_1$.  Hence,  the arrows in $\GMap(*_1,\calG)$ are in one-to-one correspondence with the arrows in $\calG$.  Again, looking at the topology of $\GMap(*_1,\calG)_1$, it is easy to see that   this correspondence is a homeomorphism.  
\end{proof}

In the next examples, we will consider the mapping spaces from the one-point space $*_G$ (with isotropy group $G$) to the triangular billiard $\calTB$ of  Example \ref{billiard}.
We will compare the results for isotropy groups  $G$ taken to be the trivial group, $\mathbb{Z}/2$, $\mathbb{Z}/3$, and $\mathbb{Z}/6$.  

In Proposition \ref{p:e-to-G} we proved that for any orbigroupoid,  $\GMap(*_1, \calG) \simeq \calG$.  Thus $\GMap(*_1, \calTB) \simeq \calTB$.

\begin{eg} \label{example:maps-from-a-2point}
Here we describe  $\GMap(*_{\mathbb{Z}/2}, \calTB)$.   We denote the elements of $\mathbb{Z}/2$  by $1$ and $\tau$, and use the same notation for  the corresponding arrows in $*_{\mathbb{Z}/2}$.    We will denote the component disks of the object space  $\calTB_0$ by disks $A$, $B$, and $C$.  
Recall that the arrow space $\calTB_1$ includes six disjoint copies of each disk, one for each element of $D_3$.  We will denote the arrow disk  that represents the rotation element $\rho$ of $D_3$ acting on disk $A$ by $\rho_A$.  So arrows in the disk $\rho_A$ have source $x \in A$ and target $\rho x \in A$.     The other arrow disks will be denoted similarly.  The arrow space $\calTB_1$ also contains overlaps representing the arrows with source in one disk and target in another.

The object space of $\GMap(*_{\mathbb{Z}/2}, \calTB)$ consists of homomorphisms from $*_{\mathbb{Z}/2}$ to $\calTB$.  Any such homomorphism will map the object $*$ to a point $y$ in one of the disks $A$, $B$, or $C$, and the arrow $1$ to the corresponding point in $1_A$, $1_B$, or $1_C$.  The arrow $\tau$ must map to an arrow $y \to y$, so the choices for the image of the arrow $\tau$ will depend on the point $y$.  We will consider what happens when $y$ is in disk $A$; the cases with $y$  in disks $B$ and $C$ are similar.  If  $f_0(*)=y$ has trivial isotropy, so $y$ is neither the center point nor a point on one of the reflections lines in disk $A$, then  $f_1(\tau)$ must map to the point representing $id_y$ in  disk $1_A$;  see map  $f$ in Figure \ref{fig:func}.
If $f'_0(*)=y$ is a point on one of the reflection lines, say the one for the reflection $\sigma$ on disk $A$, but  not the center point, then $f'_1(\tau)$ can be either in disk $1_A$ or disk $\sigma_A$; see map $f'$ in Figure \ref{fig:func}.  Similarly for $y$ fixed by  $\sigma\rho$ or $\sigma\rho^2$.   
Finally, if $f''_0(*)=y$ is the center point of disk A, then every element of $D_3$ fixes $y$.   However, since $\tau$ has order $2$, $f''_1(\tau)$ must  be the center point of one of the arrow disks $1_A$, $\sigma_A$, $\sigma\rho_A$, or $\sigma\rho^2_A$;  see map $f''$ in  Figure \ref{fig:func}.
\begin{figure}[h]
\includegraphics{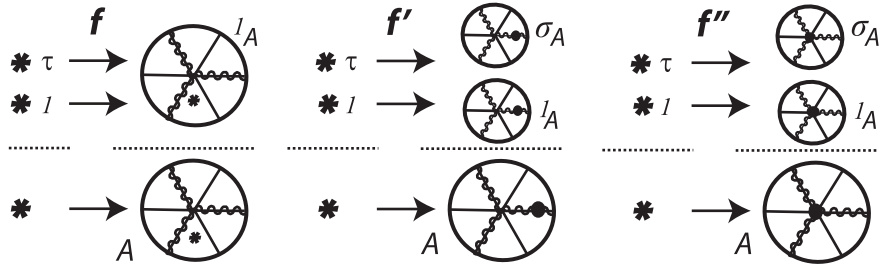}
\caption{Some objects in $\GMap(*_{{\mathbb Z}/2}, \mathcal{TB})_0$ }%
\label{fig:func}
\end{figure}

As a topological space, $\GMap(*_{\mathbb{Z}/2}, \calTB)_0$  is defined as a subspace of  $\Map(*, \calTB_0) \times \Map({\mathbb{Z}/2}, \calTB_1) $  with the compatibility conditions making $f_0$ and $f_1$ into a functor of groupoids.    So if $f_1(\tau) $ and $f'_1(\tau)$  land in a different components of $\calTB_1$ as in the above case, then $f $ and $f'$ will be in different components of  $\GMap(*_{\mathbb{Z}/2}, \calTB)_0$.     On the other hand, the example homomorphisms $f'$ and $f''$ from above are in the same component, the component which takes $y$ to the points fixed by $\sigma$ and takes $\tau$ to the arrow disk $\sigma_A$,  and this component is  homeomorphic to the line segment that is the subspace of disk $A$ fixed by $\sigma$.  
Similarly we get line segments corresponding to $f_1(\tau) = \sigma \rho $ and $f_1(\tau) = \sigma \rho^2 $.  So each disk $A$, $B$ and $C$ contributes one disk and three line segments to the space of objects  $\GMap(*_{\mathbb{Z}/2}, \calTB)_0$.     Note that there are $4$ homomorphisms with image $y = c$ the  center point, showing up in each of the $4$ components.  See Figure \ref{fig:ob}.  
\begin{figure}[h]
\includegraphics{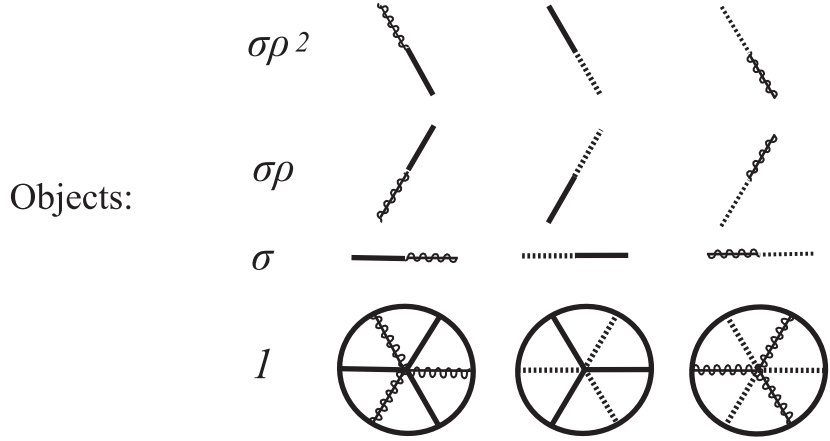}
\caption{Space of objects $\GMap(*_{{\mathbb Z}/2}, \mathcal{TB})_0$ }
\label{fig:ob}
\end{figure}

Now we consider the space of arrows $\GMap(*_{{\mathbb Z}/2}, \calTB)_1 $,
given by  natural transformations $\alpha\colon f \to f'$ between functors $f$ and $f'$.  These are  defined by a map
$\alpha \in \Map(*, \calTB_1)$ with the properties that $\alpha(*):f_0(*)\to f'_0(*)$, and for any map $g:*\to * $ in the arrow space of $*_{\mathbb{Z}/2}$, the following diagram commutes:
 \begin{equation*}
\xymatrix{
f_0(*) \ar[r]^{\alpha(*)} \ar[d]_{f_1(g)}   & f'_0(*) \ar[d]^{f'_1(g)} \\
f_0(*) \ar[r]_{\alpha(*)} & f'_0(*)}
\end{equation*}
Note that for commutativity to hold, we must have  the conjugacy relation
\begin{equation}\label{eq:Z2rel}
\alpha(*)f_1(g)\alpha(*)^{-1} = f'_1(g).
\end{equation}
If  $g=1$, then $f_1(g)$ and $f'_1(g)$ are identity maps and the diagram always commutes.  So 
suppose $g=\tau$.    If $f_1(\tau)$ is an identity, then the conjugacy relation becomes $\alpha(*)\alpha(*)^{-1} = f'_1(\tau)$, and so $f'_1(\tau)$ must also be an identity.  In this case,  the natural transformations $\alpha:f\Rightarrow f'$ correspond exactly to  maps $f_0(*)\to f'_0(*)$ in $\calTB_1$.  Thus these arrows  form a subspace $\GMap(*_{{\mathbb Z}/2}, \calTB)_1$ consisting of a copy of $\calTB_1$.
This is not surprising since these homomorphisms factor through $*_1$, and by Proposition \ref{p:e-to-G}, homomorphisms from $*_1$ and their natural transformations form a copy of  $\calTB$.  

Now, consider the case where  $f_1(\tau)$ is a non-identity arrow.  The conjugacy relation~ (\ref{eq:Z2rel}) implies that $f'_1(\tau)$  must also be a non-identity arrow.  Let $y=f_0(*)$ and $y'=f'_0(*)$.  By our analysis of $\GMap(*_{{\mathbb Z}/2}, \mathcal{TB})_0$, we know that $y$ and $y'$ are points on reflection lines in disks $A$, $B$, or $C$ (they may be in different disks).  

First, suppose that $y$ and $y'$ are in the same disk, say disk $A$, and  $f_1(\tau)$ is the corresponding arrow in one of the disks $\sigma_A$, $\sigma\rho_A$, or $\sigma\rho^2_A$. 
Note that $\alpha(*):y\to y'$, so  $y'$ must be one of $y$, $\rho y$, or $\rho^2 y$, and  $\alpha f_1(\tau)\alpha^{-1} = f'(\tau)$.  So for example, if  $y'= \rho y$ and $f(\tau) = \sigma$, there are two possible choices for $\alpha$ that will satisfy  (with abuse of notation) $\alpha\sigma\alpha^{-1} = \sigma\rho$:  $\alpha(*)$ must be the  arrow corresponding to $y$ in either disk $\rho_A$ or disk $\sigma\rho^2_A$.  Similarly, for each such pair $y, y'$ we have exactly  two natural transformations from $f$ to $f'$.  Note that this includes the case when $f=f'$.    Again, the topology is based on a subspace of the  mapping space $\Map (*,  \calTB _1)$, which means that we have components corresponding to the $\alpha$'s that map $\tau$ into the various components of $\calTB_1$.     So the arrow space  $\GMap(*_{{\mathbb Z}/2}, \calTB)_1$ contains $6$ copies of each reflection line, with two copies mapping to each of itself and  the other $2$ lines.  

Now consider the case where $y$ and $y'$ are in different disks, say $y$ is in disk $A$ and $y'$ is in disk $C$. Then if  $\alpha(*):y\to y'$, both $y$ and $y'$ must be in the overlap area of disks $A$ and $C$.  In this case, there will be exactly two such $\alpha(*)$ arrows glueing an end of the line segment associated with disk $A$ to an end of the line segments associated with disk $C$, one with a reflection and one without (see Example \ref{billiard});    for each of these choices, the conjugacy relation \ref{eq:Z2rel} will hold.     Note that topologically,  these are subspaces of the $3 \times 36=108$ components of the overlap arrows in $\calTB_1$, now arranged to glue together the overlaps of the $3 \times 3 = 9$ line segments, with $2$ glueings in each direction for each choice of pairs that overlap.   We illustrate the effect of all of the arrows (glueings) on the line segments in $\GMap(*_{\mathbb{Z}/2},\calTB)_0$ in Figure \ref{fig:nat-trans-ident};  note that we have not drawn the multiplicity of the glueing arrows.  
\begin{figure}[h]
\includegraphics{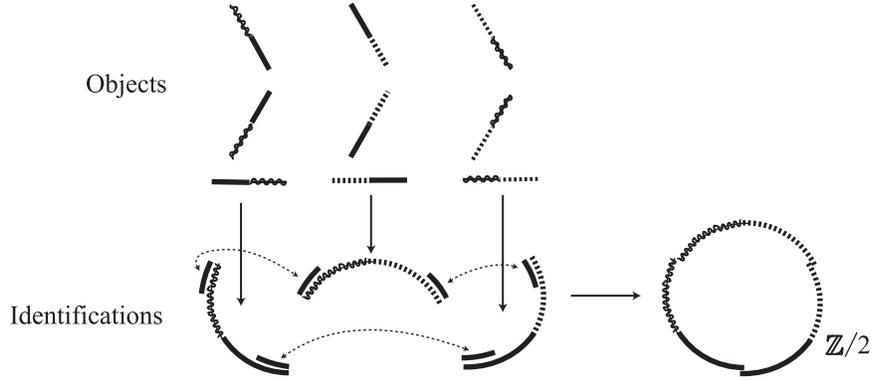}
\caption{Glueings of the line segment components of $\GMap(*_{{\mathbb Z}/2}, \mathcal{TB})_0$ }
\label{fig:nat-trans-ident}
\end{figure}
As shown in the figure, the quotient space is a circle with $\mathbb{Z}/2$-isotropy at each point.

Putting all of the natural transformations together, we have that the arrow space $\GMap(*_{\mathbb{Z}/2},\calTB)_1$  contains a copy of $\calTB_1$, 6 line segments for each of the 3 reflection lines in each of the disks $ A$, $B$, and $C$, and one shorter line segment for each overlap in $\calTB_1$.   The line segments of $\GMap(*_{\mathbb{Z}/2},\calTB)_0$ wind up glued together into a circle with $\mathbb{Z}/2$-isotropy at each point, and the three disks in $\GMap(*_{\mathbb{Z}/2},\calTB)_0$ are glued together into a copy of $\calTB$, as in  Figure \ref{fig:circle-bill}.     \begin{figure}[h]
\includegraphics{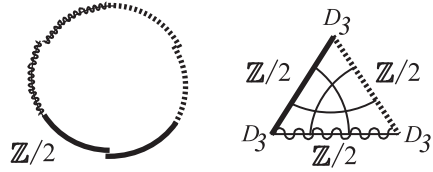}
\caption{The quotient space of $\GMap(*_{{\mathbb Z}/2}, \mathcal{TB})$ }
\label{fig:circle-bill}
\end{figure}
\end{eg}

This example shows how the mapping groupoid inherits both topology and isotropy structure from the domain and codomain groupoids, giving a rather interesting structure.  For comparison, we will 
now construct $\GMap(*_{{\mathbb Z}/3}, \mathcal{TB})$, which we will see is a lot simpler since there is only one point 
with an isotropy group whose order is a multiple of $3$.

\begin{eg} \label{example:maps-from-a-3point}
Now we consider $\GMap(*_{\mathbb{Z}/3},\calTB)$.  
We denote the elements of $\mathbb{Z}/3$ by $1$, $\nu$, and $\nu^2$, and hence also  the arrows of $*_{\mathbb{Z}/3}$.  
The analysis of  $\GMap(*_{\mathbb{Z}/3},\calTB)$ is very similar to that of Example \ref{example:maps-from-a-2point}.    The difference is that $\mathbb{Z}/3$ has elements of order 3 instead of order 2, and so we will be looking at  points in $\calTB_0$ with  order $3$ isotropy elements.  
As before, any homomorphism $f\in\GMap(*_{\mathbb{Z}/3},\calTB)_0$ will map the object $*$ to a point $y$ in one of the disks $A$, $B$, or $C$, and the arrow $1$ to the corresponding point in $1_A$, $1_B$, or $1_C$.  If $f_0(*)=y$ does not have order 3 isotropy, i.e., if $y$ is not the center point of a disk, then $f_1(\nu)$ and $f_1(\nu^2)$ must also map to the corresponding identity point in $1_A$, $1_B$, or $1_C$.  If $f_0(*)=y$ is the center point of a disk, say disk $A$, then $f_1(\nu)$ can be any arrow of order $3$, namely the center point of one of the disks $1_A$, $\rho_A$, or $\rho^2_A$.  Since $\nu^2$ is the inverse of $\nu$, $f_1(\nu^2)$ is  determined by $f_1(\nu)$ and must be  the center point of $1_A$, $\rho^2_A$, or $\rho_A$, respectively.   So we get 3 homomorphisms with $f_0(*)=y$ the center of disk $A$, one of which is the centre point for an identity component disk, and two isolated points which are their own components.  The other two disks are similar, and each disk $A$, $B$ and $C$ contributes one disk and two points to $\GMap(*_{\mathbb{Z}/3},\calTB)_0$.  See Figure \ref{fig:ob3}.

\begin{figure}[h]
\includegraphics{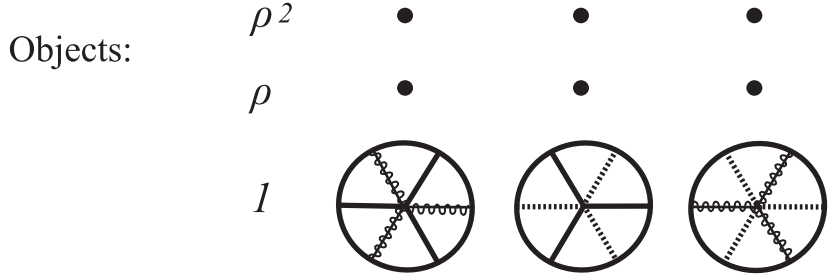}
\caption{Space of objects $\GMap(*_{{\mathbb Z}/3}, \mathcal{TB})_0$ }
\label{fig:ob3}
\end{figure}

For the space of arrows $\GMap(*_{\mathbb{Z}/3},\calTB)_1$, consider the natural transformations $\alpha$ between functors  $f,f'\in \GMap(*_{\mathbb{Z}/3},\calTB)_0$.  As before, $\alpha\in\Map(*,\calTB_1)$  is such that $\alpha(*):f_0(*)\to f'_0(*)$, and for any  arrow  $g $ of  $ *_{\mathbb{Z}/3}$,   $\alpha(*)f_1(g)\alpha(*)^{-1} = f'_1(g)$.   The conjugacy relation always holds for $g=1$, and if it holds for $g=\nu$, then it will also hold for $g=\nu^2$, so we only need to check that the conjugacy relation holds for $g=\nu$.

Suppose $g=\nu$.    If $f_1(\nu)$ is an identity, then the conjugacy relation becomes $\alpha(*)\alpha(*)^{-1} = f'_1(\nu)$, and $f'_1(\nu)$ must also be an identity, and 
As in the previous example,  the natural transformations $\alpha:f\Rightarrow f'$ correspond exactly to  maps $f_0(*)\to f'_0(*)$ in $\calTB_1$, and these arrows form a subspace $\GMap(*_{{\mathbb Z}/3}, \calTB)_1$ consisting of a copy of $\calTB_1$.

Now, consider the case where  $f_1(\nu)$  and $f'_1(\nu)$  are non-identity arrows.  Let $y=f_0(*)$ and $y'=f'_0(*)$;   then $y$ and $y'$ are center points of disks $A$, $B$, or $C$.   They must both be in the same disk for there to be an arrow $\alpha(*):y\to y'$ in $\calTB_1$.
If $y=y'$ is the center point of disk $A$, there are three possible $\alpha$'s for each choice of $f_1$  and   $f_1'$.   For example, if $f_1(\nu)\in\rho_A$ and $f'_1(\nu)\in\rho^2_A$, then $\alpha(*)$ is the center point of any one of $\sigma_A$, $\sigma\rho_A$ and $\sigma\rho^2_A$.   These arrows glue together the pairs of object points in $\GMap(*_{\mathbb{Z}/3},\calTB)_0$  for disk $A$, resulting in one point with $\mathbb{Z}/3$-isotropy in the quotient space.
The result is similar if $y$ is the center point of disk $B$ or $C$.

All together, we have that the arrow space $\GMap(*_{\mathbb{Z}/3},\calTB)_1$  contains a copy of $\calTB_1$ and 12 disjoint arrow points  associated to each of the disks $A$, $B$, and $C$. 
The quotient space for the mapping groupoid $\GMap(*_{\mathbb{Z}/3},\calTB)$ is composed of a copy of $\calTB$ and three disjoint points, each with $\mathbb{Z}/3$-isotropy.   See Figure \ref{fig:3p-bill}.

\begin{figure}[h]
\includegraphics{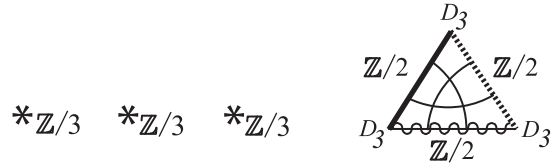}
\caption{The quotient space of $\GMap(*_{{\mathbb Z}/3}, \mathcal{TB})$}
\label{fig:3p-bill}
\end{figure}
\end{eg}

We have seen how maps from $*_{\mathbb{Z}/2}$ pick up information about isotropy of order $2$, and maps from $*_{\mathbb{Z}/3}$ pick up information about isotropy of order $3$.  If we want to get all of it combined, we consider $G= \mathbb{Z}/6$. 

\begin{eg} \label{example:maps-from-a-6point}
Finally we consider
$\GMap(*_{\mathbb{Z}/6},\calTB)$, generated by $\gamma$ with $\gamma^6 =  1$.  Note that  $\mathbb{Z}/6$ is isomorphic to
$\mathbb{Z}/2\times\mathbb{Z}/3$, where the order two generator $\tau = \gamma^3$ and the order three generator $\nu = \gamma^2$.  Again, we use the same notation for the arrows of 
$*_{\mathbb{Z}/6}$.   The analysis is a
combination of the analyses above for $*_{\mathbb{Z}/2}$ and
$*_{\mathbb{Z}/3}$.

Any homomorphism $f\in\GMap(*_{\mathbb{Z}/6},\calTB)_0$ maps $*$ to a
point $y$ in one of the disks of $A$, $B$, or $C$ and arrow $1$ to
the corresponding point in $1_A$, $1_B$, or $1_C$.  We have to consider 
points with isotropy of order $2$ or $3$ (or trivial isotropy).  
Since a homomorphism respects composition
and $\gamma$ generates $\mathbb{Z}/6$,  the image $f_1(\gamma)$ determines the rest of $f_1$. 

If $f_0(*)=y$ does
not have isotropy of order $2$ or $3$, then $\gamma$  (and hence all of
the other arrows in $\mathbb{Z}/6$) maps to $id_y$, and we get a homeomorphic copy of each of the disks.  If $y$ has order $2$ isotropy (and hence is on a reflection
line) then $f_1$ can map $\gamma$ to any of the arrows determined in the 
$*_{\mathbb{Z}/2}$ case, that is, to reflection arrows.  If, for example, $f_1(\gamma) $ is in the $ \sigma$ arrow disk,  then $f_1(\gamma^3)=  f_1(\gamma^5) $ are also the arrow corresponding to $y $ in the $\sigma$ disk,  and $f_1(\gamma^2) $ and $f_1(\gamma^4)$  map to the identity arrow on $y$.    As before,  this  
contributes three line segments for each disk to the object space.   If $y$ has order $3$ isotropy (and hence is a 
center point) then $f_1(\gamma)$ can be any of the order $3$ rotation arrows.   So we also have two isolated points for each disk. 

Overall,  we get a disk, three line segments, and two points in $\GMap(*_{\mathbb{Z}/6},\calTB)_0$
associated to each disk. See Figure \ref{fig:ob6}.

\begin{figure}[h] \includegraphics{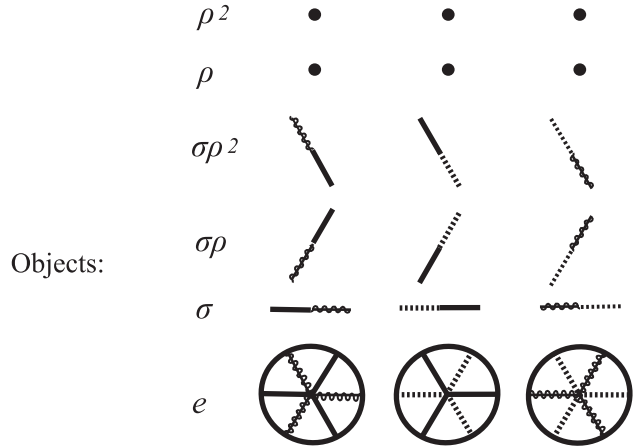}
\caption{Space of objects $\GMap(*_{{\mathbb Z}/6}, \mathcal{TB})_0$ }
\label{fig:ob6} \end{figure}

The space of arrows  $\GMap(*_{\mathbb{Z}/6},\calTB)_1$ also correspond to the arrows considered in Examples \ref{example:maps-from-a-2point} and \ref{example:maps-from-a-3point} , since they are based on conjugacy relations of arrows in $\calTB$.  
 We get 6 line segments in $\GMap(*_{\mathbb{Z}/6},\calTB)_1$  
 for each reflection line in each of the disks in $\calTB_0$, and 
12 disjoint arrow points  in $\GMap(*_{\mathbb{Z}/6},\calTB)_1$ for the isolated points of each of the disks.   The quotient space of the mapping groupoid
$\GMap(*_{\mathbb{Z}/6},\calTB)$ is given in Figure
\ref{fig:3p-circle-bill}.  Notice that it includes each of the
components of the quotient spaces of $\GMap(*_{\mathbb{Z}/2},\calTB)$
and $\GMap(*_{\mathbb{Z}/3},\calTB)$, as expected.

\begin{figure}[h] \includegraphics{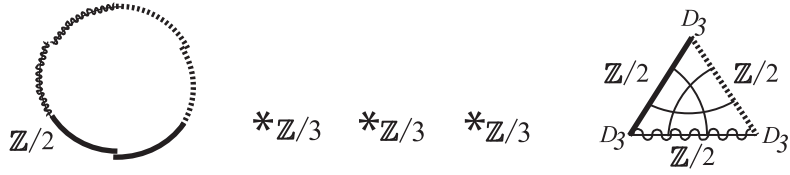}
\caption{The quotient space of $\GMap(*_{{\mathbb Z}/6}, \mathcal{TB})$ }
\label{fig:3p-circle-bill} 
\end{figure}

\end{eg}
%%%%%%%%%%%%%%%%%%%%%%%%%%%%%%%%%%%%
%%%%%%%%%%%%%%%%%%%%%%%%%%%%%%%%%%%%%

\begin{eg} \label{e:maptocone}
We now describe the mapping space   $\GMap(\calI ,\calC_3)$ from the interval to the order three cone point of Example \ref{cone}.
 For any $f\in\GMap(\calI ,\calC_3)_0$, we must have $f_0:\calI_0\to D^2$ and $f_1:\calI_1\to (\calC_3)_1=\mathbb{Z}/3\times D^2$, compatible with the functor conditions.    Any  continuous map from the interval to the disk will work for $f_0$.   Then, to be compatible,  $f_1$ must be of the form $f_1(id_x)=(id_{f_0(x)} ,f_0(x))$ for each $x\in \calI_0$.  Thus, $\GMap(\calI,\calC_3)_0$ is homeomorphic to ${\rm Map}(I,D^2)$,  the space of continuous maps $I\to D^2$ with the $k$-ified compact open topology.

  Next, to understand $\GMap(\calI,\calC_3)_1$, let $f\in \GMap(\calI, \calC_3)_0$.  We will characterize the natural transformations $\alpha$ by looking at the possible homomorphisms $f'\in \GMap(\calI, \calC_3)_0$ for which there is a natural transformation $\alpha :f\to f'$.   By the definition of a natural transformation, for each $x\in \calI_0$, we must have $\alpha(x)\in(\calC_3)_1$ such that $\alpha(x):f_0(x)\to f'_0(x)$.  It follows that $f'_0(x)$ must be in the orbit of $f_0(x)$ under the action of $\mathbb{Z}/3$.  Additionally, since $\alpha$ must be continuous,  it lands in one component of the arrows of $\calC_3$, and so $f'_0$ must be $1f$,  $\nu f$ or $\nu^2 f$ for all $x \in \calI_0$.  For each such $f'$, we get exactly one $\alpha :f\to f'$ where $\alpha$ maps into the component of the appropriate rotation.    The topology is given by a subspace of ${\rm Map}(I, D^2 ) \times {\rm Map}(I, \mathbb{Z}/3 \times D^2) $, and the compatibility in this case means we are really just looking at $ {\rm Map}(I, \mathbb{Z}/3 \times D^2)$ which is homemorphic to   $\mathbb{Z}/3 \times {\rm Map}(I, D^2)$, with source and target maps defined as in a translation groupoid.     
  
So the mapping groupoid is just the translation groupoid  $\GMap(\calI ,\calC_3) \simeq   \mathbb{Z}/3 \ltimes {\rm Map}(I, D^2)$.  
Note that each point in $\GMap(\calI,\calC_3)_0={\rm Map}(I, D^2)$ has trivial isotropy, except the point given by the map taking 
the entire interval $I$ to the cone point at the center of $D^2$, which has $\mathbb{Z}/3$-isotropy.
%\begin{figure}[ht!]
%\centering
%\includegraphics[scale=.5]{gmap(I,C)}
%\caption{Representation of $\GMap(\calI ,\calC)$}
%\label{fig:gmapIC}
%\end{figure}

\end{eg}

This example can be generalized as follows.  

\begin{prop}

If $X$ is a connected orbispace with trivial isotropy,  represented as an orbigroupoid $X$ with object space $X$ and arrow space $X$ (the identity maps),  and $G$ is a finite group acting on a topological space $Y$, then $$\GMap(X, G \ltimes Y ) \simeq G \ltimes {\rm Map}(X, Y).$$  \end{prop}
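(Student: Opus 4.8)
The plan is to construct an explicit isomorphism of topological groupoids
$\GMap(X,G\ltimes Y)\xrightarrow{\ \sim\ }G\ltimes{\rm Map}(X,Y)$, where $G$ acts on ${\rm Map}(X,Y)$ by postcomposition with its given action on $Y$, and then to check that this bijection is a homeomorphism on both objects and arrows for the $k$-ified compact--open topologies. Recall that, since the orbigroupoid $X$ has object space $X$, arrow space $X$, and only identity arrows, a homomorphism $f\colon X\to G\ltimes Y$ consists of a continuous map $f_0\colon X\to Y$ together with $f_1\colon X\to G\times Y$ subject to $f_1(\mathrm{id}_x)=u(f_0(x))=(1,f_0(x))$. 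Thus $f_1$ is completely determined by $f_0$, and $f_0$ may be an arbitrary continuous map $X\to Y$; on underlying sets this gives a bijection $\GMap(X,G\ltimes Y)_0\cong{\rm Map}(X,Y)$. Topologically, $\GMap(X,G\ltimes Y)_0$ carries the subspace topology from ${\rm Map}(X,Y)\times{\rm Map}(X,G\times Y)$, and because the second coordinate $f_1=(c_1,f_0)$ (with $c_1$ the constant map at $1\in G$) depends continuously on the first, this subspace is the graph of a continuous map and hence is homeomorphic, via the first projection, to ${\rm Map}(X,Y)$.

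Next I would analyze the arrows. A natural transformation $\alpha\colon f\Rightarrow f'$ is a continuous map $\alpha\colon X\to G\times Y$ with $s\circ\alpha=f_0$ and $t\circ\alpha=f'_0$; the naturality squares are indexed by the identity arrows of $X$ and commute automatically, since $f_1(\mathrm{id}_x)$ and $f'_1(\mathrm{id}_x)$ are identities. Writing $\alpha(x)=(\gamma(x),f_0(x))$, the $G$-coordinate $\gamma\colon X\to G$ is a continuous map into a discrete space; because $X$ is connected it is a constant $g\in G$, and then $f'_0(x)=t\alpha(x)=g\cdot f_0(x)$, so $f'_0=g\cdot f_0$. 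Conversely, for every $g\in G$ and every $f_0\in{\rm Map}(X,Y)$ the assignment $x\mapsto(g,f_0(x))$ is such a natural transformation. Since an arrow $(f_0,f_1,\alpha,f'_0,f'_1)$ of $\GMap(X,G\ltimes Y)$ is determined by $\alpha$ alone, we obtain a bijection $\GMap(X,G\ltimes Y)_1\cong G\times{\rm Map}(X,Y)$, realized topologically as the graph of a continuous map over ${\rm Map}(X,G\times Y)\cong{\rm Map}(X,G)\times{\rm Map}(X,Y)$.

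Then I would match the structure maps. Under these bijections the source of $(g,f_0)$ is $f_0$, the target is $g\cdot f_0$, the unit of $f_0$ is $(1,f_0)$, the composite of $(g,f_0)\colon f_0\to g f_0$ with $(g',g f_0)\colon g f_0\to g'g f_0$ is $(g'g,f_0)$, and the inverse of $(g,f_0)$ is $(g^{-1},g f_0)$. Each of these follows directly from the defining formulas for the structure maps of $\GMap$ given in this section, together with the composition rule $(g',g y)\circ(g,y)=(g'g,y)$ and the inversion rule $(g,y)^{-1}=(g^{-1},g y)$ in $G\ltimes Y$; for instance the composition formula $m(\alpha,\beta)(x)=\beta(x)\circ\alpha(x)$ turns the constant $G$-coordinates $g$ and $g'$ into the constant $G$-coordinate $g'g$. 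This is precisely the groupoid structure of the translation groupoid $G\ltimes{\rm Map}(X,Y)$, so the bijections assemble into an isomorphism of topological groupoids.

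The step I expect to be the main obstacle is the purely topological one: checking that the two bijections above are homeomorphisms. The key lemma is that for $X$ connected and nonempty, ${\rm Map}(X,G)$ is the discrete space $G$ --- a continuous map from a connected space to a discrete group is constant, and each singleton $\{$constant at $g\}$ is the open set $W(\{x_0\},\{g\})$ in the compact--open topology, a property preserved by $k$-ification. Combined with the fact that the $k$-ified compact--open mapping-space functor sends finite products in the target to products (so ${\rm Map}(X,G\times Y)\cong G\times{\rm Map}(X,Y)$), the homeomorphism claims reduce to the standard observation that the graph of a continuous map is homeomorphic to its domain. I would also remark that $G\ltimes{\rm Map}(X,Y)$ need not be an orbigroupoid --- ${\rm Map}(X,Y)$ is generally not locally compact or finite-dimensional --- so the conclusion is an isomorphism of topological groupoids, not a statement about orbigroupoids.
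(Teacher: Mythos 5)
Your proof is correct and follows essentially the same route as the paper's: objects are determined by $f_0$ alone, and an arrow is determined by $f_0$ together with a single group element $g$ because the $G$-coordinate of $\alpha$ is a continuous map from the connected space $X$ to the discrete space $G$, hence constant. You simply spell out the topological identifications and the structure-map verification in more detail than the paper, which defers to its preceding example.
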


\begin{proof}The argument is the same as given in Example \ref{e:maptocone} above.  Let $f\in \GMap(X, G\ltimes Y)_0$.   Then $f$ is  determined by $f_0:X\to Y$ since $X_1$ consists only of identity arrows.  So $\GMap(X, G\ltimes Y)_0$ is homeomorphic to ${\rm Map}(X,Y)$.    Arrows $\alpha:f\to f'$ in $\GMap(X, G \ltimes Y )_1$ are given by continuous maps $\alpha:X\to G\times Y$ such that for each $x\in X$, $\alpha(x):f_0(x)\to f'_0(x)$.  Thus, such an arrow exists only when  $f'_0(x)$ is in the orbit of $f_0(x)$ under the action of $G$, and continuity requires that $\alpha$ land in one component of the arrows  $G\times Y$.  Therefore   $f'_0$ is of the form $gf_0$ for some $g\in G$, and $\alpha$ is completely determined by $f$ and $g$;  so $\GMap(X, G\ltimes Y)_1$  is homeomorphic to $ G\times {\rm Map}(X,Y)$, with source and target maps as given by the translation groupoid  $G \ltimes {\rm Map}(X, Y).$
\end{proof}

In each of these cases, we produced an orbispace.  However, $\GMap(\calG,\calH)$ is not necessarily an orbigroupoid (i.e., \'etale and proper) in general.  Even when it is, we have examples showing that it is not invariant under Morita equivalence in the sense that 
when $\calG$ and $\calG'$ are Morita equivalent orbigroupoids, and $\calH$ and $\calH'$
are Morita equivalent orbigroupoids, $\GMap(\calG,\calH)$ and $\GMap(\calG',\calH')$ are not necessarily Morita equivalent;  see Examples \ref{ItoT} and \ref{e:codomain}.      Therefore the mapping groupoids we have created are not sufficient to be mapping objects for the category of orbispaces.    However, they do form the basic foundation for defining a mapping orbispace which encodes the generalized maps;  see  \cite{PS-tocome}.

 \section{Inertia Groupoids and Mapping Groupoids}\label{inertia}

In this section we will show that the inertia groupoid of \cite{kawasaki} can be obtained as a mapping groupoid.
\begin{dfn}  Given an orbigroupoid  $\calG$, its {\em inertia groupoid} $\wedge \calG$ is defined as follows:

\begin{itemize}
\item \textbf{Objects} $(\wedge \calG)_0$: objects in $(\wedge \calG)_0$ are loops,
 \begin{align*}
(\wedge \calG)_0 & = \{ g \in \calG_1 \vert s(g)=t(g)   \} \\
                       & = \{ (x, g) \in \calG_0 \times \calG_1 \vert x = s(g)=t(g) \}
\end{align*}
Note that since $s(g)=t(g)$, $g$ is an element of the isotropy group of $x=s(g)$,
i.e. $g \in G_x $

\item \textbf{Arrows} $(\wedge \calG)_1$: Let $x, y \in \calG_0$, and $h: x \to y $
be an arrow $h \in \calG_1$.  Then $h$ defines arrows   $(x, g) \to (h(x)=y, hgh^{-1})$ for all $g \in G_x$.
\end{itemize}
\end{dfn}

The examples we have seen thus far indicate that we might be able to describe the inertia groupoid $\Lambda\calG$
as 
a mapping groupoid of the form $\GMap(*_G,\calG)$. This is indeed the case, but if we want to do it as a map from $*_G$ with finite isotropy group $G$ (so that $*_G$ is an orbigroupoid) we need to require that  there is an upper bound 
on the orders of the isotropy groups of $\calG$. A natural way to ensure this is to require that $\calG$ is orbit compact.

\begin{dfn} 
An orbigroupoid $\calG$ is {\em orbit compact} if the quotient space $\calG_0/\calG_1$ is compact.
\end{dfn}

%In \cite{Adem} Adem gives an alternate definiton of  the inertia groupoid for a global quotient $\calG \ltimes X$
%$$\wedge \calG = \calG \ltimes S_{\calG} $$
%where $S_{\calG}= (\wedge \calG)_0$.
%In this case, if we define  $S_g = \{ x \vert s(g)=t(g) = x\}$, the fixed set of $g$, then
%$\calG$ acts on the fixed sets $S_{g_i} $ by $S_g \xrightarrow{h} S_{hgh^{-1}}$.  So if we write $h\in S$ as the conjugacy class representatives in $\calG$, and denote the conjugacy class of $h$ with $(h)$, then  the inertia groupoid can be expressed as a disjoint union of groupoids

%$$\calG \ltimes S_{\calG} = \sqcup_{h}( \calG \ltimes S_{(h)})$$

%\vspace{5pt}

We will prove that for orbit compact groupoids, the inertia groupoid can be obtained as a mapping space $ \GMap(*_{{\mathbb Z}/n}),\calG)$.

\begin{rmk}
This condition is rather natural. In \cite{PS-tocome} we  require the condition of orbit compactness on $\calG$ 
in order to show that the  mapping groupoid  produced there gives an orbispace.  
This condition is equivalent to Haefliger's condition \cite{Hae} that $\calG_0$ have a relatively compact 
open subset which meets every orbit (used there in proving that the 2-category of \'etale groupoids and 
Hilsum-Skandalis maps is Cartesian closed.)
\end{rmk}

\begin{prop}
There is a functor $\Phi :\GMap(*_{{\mathbb Z}/n}, \calG) \to \Lambda(\calG)$.
\end{prop}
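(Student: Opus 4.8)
The plan is to define $\Phi$ explicitly on objects and on arrows, using the description of $\GMap(*_{{\mathbb Z}/n},\calG)$ from the previous section, and then to verify that it is a continuous functor. First, fix a generator $c_0$ of ${\mathbb Z}/n$. An object of $\GMap(*_{{\mathbb Z}/n},\calG)$ is a homomorphism $f\colon *_{{\mathbb Z}/n}\to\calG$, and as already observed such an $f$ amounts to a point $y=f_0(*)\in\calG_0$ together with a group homomorphism $f_1\colon {\mathbb Z}/n\to G_y$, which is itself determined by the single element $g:=f_1(c_0)$ subject only to $g^n=id_y$. Since $f$ is a functor, $s(g)=f_0(s(c_0))=y$ and $t(g)=f_0(t(c_0))=y$, so $g\in G_y$ and $(y,g)$ is an object of $\Lambda\calG$. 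I would therefore set $\Phi_0(f)=(y,g)=(f_0(*),f_1(c_0))$.

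For arrows, an arrow $f\to f'$ in $\GMap(*_{{\mathbb Z}/n},\calG)$ is a natural transformation $\alpha$, given by $h:=\alpha(*)\in\calG_1$ with $s(h)=f_0(*)=y$, $t(h)=f'_0(*)=y'$, and the naturality square forces $f'_1(c)=h\,f_1(c)\,h^{-1}$ for every $c\in{\mathbb Z}/n$; in particular $g':=f'_1(c_0)=hgh^{-1}$. Since $h$ is exactly an arrow $(y,g)\to(y',hgh^{-1})=(y',g')$ in $\Lambda\calG$, I would set $\Phi_1(\alpha)=h$. By construction $s_{\Lambda}(\Phi_1(\alpha))=(y,g)=\Phi_0(s(\alpha))$ and $t_{\Lambda}(\Phi_1(\alpha))=(y',g')=\Phi_0(t(\alpha))$, so $\Phi$ respects source and target.

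It then remains to check that $\Phi$ preserves identities and composition and is continuous. The identity arrow on $f$ is the natural transformation with $\alpha(*)=id_y$, which $\Phi_1$ sends to $id_{(y,g)}$ in $\Lambda\calG$. For composition, if $\alpha\colon f\to f'$ and $\beta\colon f'\to f''$ correspond to $h=\alpha(*)$ and $k=\beta(*)$, then by the composition formula for $\GMap$ the composite corresponds to $m(h,k)=kh$, which is precisely the composite of $h\colon(y,g)\to(y',g')$ with $k\colon(y',g')\to(y'',g'')$ in $\Lambda\calG$, so $\Phi_1$ respects $m$. The one place needing a little care — though not a genuine obstacle — is continuity: since ${\mathbb Z}/n$ is a finite discrete space, $\Map(*,\calG_0)\cong\calG_0$ and $\Map({\mathbb Z}/n,\calG_1)\cong\calG_1^n$, and $\Phi_0$ is the restriction to $\GMap(*_{{\mathbb Z}/n},\calG)_0$ of the (continuous) projection of $\calG_0\times\calG_1^n$ onto the $\calG_0$ factor together with the $\calG_1$ factor indexed by $c_0$; similarly $\GMap(*_{{\mathbb Z}/n},\calG)_1$ embeds in a product having a $\Map(*,\calG_1)\cong\calG_1$ factor recording $\alpha(*)$, and $\Phi_1$ is the restriction of the projection onto that factor. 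Projections being continuous, $\Phi$ is a homomorphism of topological groupoids, and the bookkeeping above shows it is a functor. It is worth flagging that $\Phi$ depends on the chosen generator $c_0$, and that nothing in this construction requires ${\mathbb Z}/n$ to be large relative to the isotropy of $\calG$; that hypothesis — which orbit compactness guarantees can be met — is what will later be needed to upgrade $\Phi$ to an equivalence rather than merely a functor.
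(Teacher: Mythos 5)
Your proposal is correct and follows essentially the same route as the paper: define $\Phi$ on objects by $f\mapsto(f_0(*),f_1(c_0))$ for a fixed generator, on arrows by $\alpha\mapsto\alpha(*)$, read off the conjugation relation from the naturality square, and verify compatibility with composition. Your additional checks of identity preservation and continuity (via projections from the mapping-space topology) are welcome but do not change the argument in any essential way.
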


\begin{proof}
 Let ${\mathbb Z}/n $ be generated by an element $\sigma$ of order $n$.
We define the functor
 $\Phi$  as follows.
Objects of $\GMap(*_{{\mathbb Z}/n}, \calG)$ are defined by functors, so
consider a functor $ f:  *_{{\mathbb Z}/n} \to \calG$.   
Then  $f_0(*)=x \in \calG_0$ and $f_1(\sigma) = g \in \calG_1$ such that $s(g) = t(g) = x$.  So define
 $$ \Phi(f) = (x, g) = (f(*), f(\sigma)) \in (\Lambda G) _0$$

Arrows of $\GMap(*_{{\mathbb Z}/n}, \calG)$ are defined by natural transformations between functors.  
So  suppose that  $\alpha:  f \Rightarrow f'$ is a natural transformation in $\GMap(*_{{\mathbb Z}/n}, \calG)_1 $.  
Then $\alpha$ is defined by a map $* \to h \in \calG_1$ such that the following diagram commutes:
 \[  \xymatrix { f(*)  \ar[r]^{f(\sigma)} \ar[d]_{h} & f(*) \ar[d]^{h}  \\
f'(*)  \ar[r]_{f'(\sigma) } &f'(*) }   \]
%(In fact it will work for any map $* \to *$ in $*_n$ but this is the one I care about.)
Therefore we can define $\Phi(\alpha) =\alpha(*) = h$.
The above commutative diagram says that $hf(\sigma)h^{-1} = f'(\sigma)$, and 
so $h$ defines a morphism in $(\Lambda \calG)_1$,
 $$(f(*), f(\sigma)) \overset{h}{\longrightarrow} (f'(*), f'(\sigma))= (h(f(*)) , hf(\sigma)h^{-1}).$$

To show that $\Phi :\GMap(*_{{\mathbb Z}/n}, \calG) \to (\Lambda \calG)_1$ is a functor we  need to verify that the composition is respected.   
Let $\alpha_1:  f \Rightarrow f'$ and $\alpha_2:  f' \Rightarrow f''$ be natural transformations  and $h_1$ and $h_2$ be the 
corresponding morphisms in $(\Lambda \calG)_1$.  We need to check that
$\Phi(\alpha_2 \circ \alpha_1) = \Phi(\alpha_2) \circ \Phi(\alpha_1)$:
\begin{eqnarray*}
\Phi(\alpha_2) \circ \Phi(\alpha_1) & = & (f(*), f(\sigma)) \xrightarrow{h_2 \circ h_1} \left(h_2(h_1f(*)) , h_2(h_1f(\sigma)h_1^{-1}) h_2^{-1} \right) \\
& = & (f(*), f(\sigma)) \xrightarrow{h_2 \circ h_1} \left((h_2h_1)f(*)) , (h_2h_1)f(\sigma)(h_2 h_1)^{-1} \right) \\
 & = & (f(*), f(\sigma)) \xrightarrow{h_2 \circ h_1} (f''(*), f''(\sigma)) \\
& = & \Phi (\alpha_1 \circ \alpha_2)
\end{eqnarray*}
 \end{proof}
 
\begin{prop}\label{full}  For any $n$, the functor
$\Phi$ is injective on objects,  and full and faithful.  So $\Phi$ is an inclusion of    $\GMap(*_n, \calG)$ as a full subcategory of $\Lambda(\calG)$.
\end{prop}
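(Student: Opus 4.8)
The plan is to check the three required properties of $\Phi$ directly, leaning on two bookkeeping facts that were already used implicitly in constructing $\Phi$: first, a homomorphism $f\colon *_{{\mathbb Z}/n}\to\calG$ is completely determined by the pair $(f_0(*),f_1(\sigma))$, because $\sigma$ generates ${\mathbb Z}/n$ and hence $f_1(\sigma^k)=f_1(\sigma)^k$; and second, a natural transformation $\alpha\colon f\Rightarrow f'$ between such homomorphisms is completely determined by its single component $\alpha(*)\in\calG_1$, since the object space of $*_{{\mathbb Z}/n}$ is a single point.

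First I would treat injectivity on objects. If $\Phi(f)=\Phi(f')$ then $f_0(*)=f'_0(*)$ and $f_1(\sigma)=f'_1(\sigma)$; the first gives $f_0=f'_0$, and the second gives $f_1(\sigma^k)=f_1(\sigma)^k=f'_1(\sigma)^k=f'_1(\sigma^k)$ for all $k$, so $f_1=f'_1$ and $f=f'$. Faithfulness is equally short: if $\Phi(\alpha)=\Phi(\alpha')$ for $\alpha,\alpha'\colon f\Rightarrow f'$, then $\alpha(*)=\alpha'(*)$, and a natural transformation out of $*_{{\mathbb Z}/n}$ is nothing more than the choice of this one arrow, so $\alpha=\alpha'$.

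The substantive step is fullness. Given objects $f,f'$ with $\Phi(f)=(x,g)$ and $\Phi(f')=(x',g')$, and a morphism $h\colon(x,g)\to(x',g')$ in $(\Lambda\calG)_1$ --- so $h\in\calG_1$ with $s(h)=x$, $t(h)=x'$ and $hgh^{-1}=g'$ --- I would set $\alpha(*)=h$ and verify that $\alpha$ is a natural transformation $f\Rightarrow f'$. The source and target conditions hold by construction, and the naturality square for the arrow $\sigma^k$ of $*_{{\mathbb Z}/n}$ reduces to $f'_1(\sigma^k)=h\,f_1(\sigma^k)\,h^{-1}$; for $k=1$ this is exactly the defining relation $g'=hgh^{-1}$ of the arrow $h$ in $\Lambda\calG$, and for general $k$ it follows by raising that equation to the $k$-th power together with $f_1(\sigma^k)=f_1(\sigma)^k$ and $f'_1(\sigma^k)=f'_1(\sigma)^k$. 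Since a map out of a one-point space is automatically continuous, $\alpha$ is a genuine element of $\GMap(*_{{\mathbb Z}/n},\calG)_1$ with $\Phi(\alpha)=h$. Assembling the three properties shows that $\Phi$ identifies $\GMap(*_{{\mathbb Z}/n},\calG)$ with the full subcategory of $\Lambda\calG$ spanned by the loops $(x,g)$ satisfying $g^n=\mathrm{id}_x$.

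I do not expect a genuine obstacle here --- the whole argument is a routine verification. The one place to be careful is the composition and conjugation convention in the naturality square (recall the paper writes $m(g_1,g_2)=g_2g_1$), and the small but essential observation that fullness requires the naturality square for \emph{every} power $\sigma^k$, not just the generator; fortunately this is automatic from functoriality of $f$ and $f'$ once it is known for $\sigma$.
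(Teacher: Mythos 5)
Your proposal is correct and follows essentially the same route as the paper's own proof: injectivity on objects because a homomorphism out of $*_{{\mathbb Z}/n}$ is determined by $(f_0(*),f_1(\sigma))$, fullness by reading the conjugation condition $hgh^{-1}=g'$ in $\Lambda\calG$ as exactly the naturality square, and faithfulness because a natural transformation is determined by its single component $\alpha(*)$. Your added remark that naturality must hold for all powers $\sigma^k$ and follows from the $k=1$ case by functoriality is a small but welcome tightening of a step the paper leaves implicit.
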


\begin{proof}

%The arrows in $\GMap(*_{{\mathbb Z}/n}, \calG)_1 $ are given by natural transformations between any functors $f$ and $f'$.
%As explained above, by the definition of For any natural transformation
The map $\Phi_0 :\GMap(*_{{\mathbb Z}/n}, \calG)_0 \to \Lambda(\calG)_0$ is  injective on objects, since it sends a functor  $f$ to $(f(*), f(\sigma))$ and a functor from $*_{{\mathbb Z}/n}$ is determined by this information.
To prove that $\Phi$ is full,  we consider arbitrary $f$ and $f'$, and suppose there is a morphism $\Phi(f) \xrightarrow{h} \Phi(f')$.   So $h$ satisfies
\begin{eqnarray*}
 h(\Phi(f))&=& h(f(*), f(\sigma)) \\
               &=& (hf(*), hf(\sigma)h^{-1}) \\
               &=& (f'(*), f'(\sigma))
\end{eqnarray*}
If $hf(*)= f'(*)$ and $hf(\sigma)h^{-1}= f'(\sigma)$ this means that the diagram
 \[  \xymatrix { f(*)  \ar[r]^{f(\sigma)} \ar[d]_{h} & f(*) \ar[d]^{h}  \\
f'(*)  \ar[r]_{f'(\sigma) } &f'(*) }   \]
commutes.  So $\alpha (*) = h$ defines a  natural transformation between $f$ and $f'$ with $\Phi (\alpha) = h$.  Hence $\Phi$ is surjective on arrow sets so it is full.

Now we show that $\Phi$ is faithful. So suppose that  $\Phi(\alpha) = \Phi(\alpha')$ where $\alpha, \alpha':  f \Rightarrow f'$ are natural transformations between functors $f$ and $f'$.  This means that if $\alpha(*) = h_1$ and $ h_2 = \alpha(*) \in (\Lambda \calG)_1$ then $h_1 = h_2$
 and
$$(h_1 f(*), h_1 f(\sigma) h_1^{-1}) = (f'(*), f'(\sigma))=(h_2 f(*), h_2 f(\sigma) h_2^{-1})   $$
So both $\alpha$ and $\alpha'$  correspond to the same commutative diagram, that is to the same natural transformation between $f$ and $f'$.
Hence the functor $\Phi$ is faithful and the proposition follows.
 \end{proof}

\begin{prop}\label{inertia-and-maps}
For each orbit compact orbigroupoid ${\calG}$, there is an $n$ such that $$\wedge(\calG)\simeq \GMap(*_{{\mathbb Z}/n}, \calG)$$
\end{prop}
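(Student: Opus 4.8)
The plan is to promote the functor $\Phi\colon\GMap(*_{\mathbb{Z}/n},\calG)\to\Lambda\calG$ constructed above to an equivalence --- in fact an isomorphism --- of topological groupoids, by choosing $n$ appropriately. By Proposition~\ref{full}, $\Phi$ is injective on objects, full, and faithful for \emph{every} $n$, so the only thing to arrange is essential surjectivity on objects. First I would pin down the image of $\Phi_{0}$: a homomorphism $f\colon *_{\mathbb{Z}/n}\to\calG$ is determined by the loop $g=f(\sigma)\in G_{x}$, $x=f(*)$, subject only to $g^{n}=\mathrm{id}_{x}$ (because $\sigma$ has order $n$), and conversely every loop of order dividing $n$ is realized. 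Since an arrow $h$ of $\Lambda\calG$ carries $(x,g)$ to $(t(h),hgh^{-1})$, and conjugation preserves the order of $g$, the object $(x,g)$ lies in the essential image of $\Phi$ exactly when $\mathrm{ord}(g)\mid n$. Hence $\Phi$ is essentially surjective if and only if $n$ is a common multiple of the orders of all loops in $\calG$, and the whole proposition reduces to producing such an $n$.

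For this I would use orbit compactness to bound the orders of the isotropy groups. Fix $x\in\calG_{0}$. By Remark~\ref{D:Etaleproper} there is a neighbourhood $V_{x}$ with $(s,t)^{-1}(V_{x})=\bigcup_{g\in G_{x}}U_{g}$, where the sheets $U_{g}$ are disjoint and both $s$ and $t$ restrict to homeomorphisms $U_{g}\to V_{x}$. For any $y\in V_{x}$, each element of $G_{y}$ lies in a unique sheet $U_{g}$, and since $s|_{U_{g}}$ is injective the resulting assignment $G_{y}\to G_{x}$ is injective; thus $|G_{y}|\le|G_{x}|$ throughout $V_{x}$. The function $x\mapsto|G_{x}|$ is constant on orbits, so it descends to $\calG_{0}/\calG_{1}$; the images $\pi(V_{x})$ are open there (the quotient map is open because $s$ and $t$ are \'etale), so the descended function is locally bounded, hence globally bounded --- say by $N$ --- since $\calG_{0}/\calG_{1}$ is compact. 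Every loop of $\calG$ then has order at most $N$, so $n=\mathrm{lcm}(1,2,\dots,N)$ is a common multiple of all loop orders, and $\Phi$ is essentially surjective for this $n$.

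With this choice of $n$ the condition $g^{n}=\mathrm{id}$ is vacuous, so in fact $\Phi$ is a bijection on objects and on arrows: $\Phi_{0}$ identifies $\GMap(*_{\mathbb{Z}/n},\calG)_{0}$ with the full loop space $(\Lambda\calG)_{0}=\{g\in\calG_{1}\mid s(g)=t(g)\}$, and $\Phi_{1}$ identifies $\GMap(*_{\mathbb{Z}/n},\calG)_{1}$ with $(\Lambda\calG)_{1}$. To see this is an isomorphism of \emph{topological} groupoids I would observe that, as $*_{\mathbb{Z}/n}$ is finite and discrete, the mapping spaces appearing in the definition of $\GMap$ are finite powers of $\calG_{0}$ and $\calG_{1}$, and that under $f\mapsto f(\sigma)$ and $\alpha\mapsto\alpha(*)$ the $\GMap$-subspaces project homeomorphically onto the loop subspace of $\calG_{1}$ and the corresponding subspace of $\calG_{1}\times_{\calG_{0}}\calG_{1}$; comparing the explicit formulas for the structure maps of $\GMap$ with those of $\Lambda\calG$ then finishes the identification. (If only the weaker statement $\wedge\calG\simeq\GMap(*_{\mathbb{Z}/n},\calG)$ as an equivalence of categories is wanted, essential surjectivity together with Proposition~\ref{full} already gives it.)

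The step I expect to be the main obstacle is the topological bookkeeping at the two ends: checking that orbit compactness really does bound the isotropy orders --- that is, the local injection $G_{y}\hookrightarrow G_{x}$, openness of the quotient map, and the descent to the compact quotient --- and that the algebraic bijections $\Phi_{0},\Phi_{1}$ are genuinely homeomorphisms for the $k$-ified compact--open topologies used to define $\GMap$. The purely categorical part, that enlarging $n$ buys essential surjectivity, is routine given the earlier propositions.
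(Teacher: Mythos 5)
Your proposal is correct and follows essentially the same route as the paper: both reduce to surjectivity on objects via Proposition~\ref{full}, both use the neighbourhoods $V_x$ of Remark~\ref{D:Etaleproper} together with orbit compactness to extract a finite list of isotropy groups bounding all loop orders, and both take $n$ to be a suitable least common multiple. Your version is slightly more explicit about why $|G_y|\le |G_x|$ on $V_x$ and about the topological identification of $\Phi$, but the substance is the same.
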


\begin{proof}
In proposition \ref{full} we have seen that for any $n$,
$$\Phi:\GMap(*_{{\mathbb Z}/n}, \calG) \to \wedge (\calG)$$
is an inclusion of $\GMap(*_{{\mathbb Z}/n}, \calG)$ as a full subcategory of $\wedge(\calG)$.
To find an isomorphism $\GMap(*_{{\mathbb Z}/n}, \calG) \xrightarrow{\cong}\wedge(\calG)$ we need to establish conditions on $n$ that will guarantee that $\Phi$ surjection on objects as well.

For each $x\in\calG_0$ let $V_x$ be a neighbourhood of $x$ as in Remark \ref{D:Etaleproper}.
Then the quotients of these $V_x$ form an open cover of the quotient space $\calG_1/\calG_0$.
Since $\calG$ is orbit compact, there is a finite subset $V_{x_1},\ldots,V_{x_m}$ such that their quotients cover the quotient space.
This means that each point in $\calG_0$ is in the orbit of a point in $\bigcup_{i=1}^mV_{x_i}$.
So all points have isotropy groups that are conjugate to the isotropy groups of points in $\bigcup_{i=1}^nV_{x_i}$.
Now for each $i=1,\ldots,n$ all points in $V_{x_i}$ have an isotropy group which is a subgroup of $G_{x_i}$, so we only need
consider the groups $G_{x_1},\ldots,G_{x_m}$ and all these groups are finite.

A map $f$   in $ \GMap(*_{{\mathbb Z}/n}, \calG)$ is given by a point $x = f(*)$ 
and an arrow $g = f(*) \in G_x$, and we have such a pair  for any $g \in G_x$ such that $g^n = 1$.  Hence  $\Phi:\GMap(*_{{\mathbb Z}/n}, \calG) \to \wedge (\calG)$ will be surjective on objects if every isotropy element $h \in \calG$ has order dividing $n$.
Since each isotropy subgroup is conjugate to a  subgroup of a structure group $G_{x_i}$,   we can define $n$ to be the smallest common multiple of the orders of the structure groups $G_{x_1},\ldots,G_{x_m}$.
\end{proof}

\section{Conclusion and Future Work}
Although the $\GMap$ construction described in this paper is very useful in understanding how the isotropy structure comes about on the mapping space, it is not sufficient as a mapping object for orbispaces.    We have mentioned that $\GMap(\calG,\calH)$ is not necessarily an orbigroupoid (i.e., \'etale and proper), and that even when it is, 
 it is not invariant under Morita equivalence.

In a forthcoming paper, two of the authors continue this project  and show how to define a mapping groupoid  which is Morita invariant,  and use this to show that the   bicategory of orbit compact orbispaces is Cartesian closed.  Producing this groupoid requires some careful considerations of how to create a small category which can be inverted, rather than working with all essential equivalences, and in how to put the topology in place on top of the generalized maps and 2-cells.  The groupoids obtained this way are  homotopy colimits of the $\GMap(\calG,\calH)$ groupoids described in this paper,
where the homotopy colimit is taken over a diagram of groupoids representing the same orbispace as $\calG$.
We will even see that the inclusions of the $\GMap(\calG,\calH)$ groupoids into the larger groupoid are all fully faithful
and all connected components of the object and arrow spaces  of the mapping spaces as orbispaces
are open subsets of the $\GMap(\calG,\calH)$ groupoids.  So all the local structure on the true orbispace  is obtained from the $\GMap(\calG,\calH)$ 
groupoids as illustrated here.    Approaching orbispaces via groupoids allows us to get a more concrete understanding of what is going on with this category and its maps.

 \end{document}